\documentclass[reqno]{amsart}

\usepackage[absolute,overlay]{textpos}
\usepackage{eso-pic}
\usepackage[a4paper,margin=1.25in]{geometry}
\usepackage{mathrsfs}
\usepackage{amsfonts,  amsmath}
\usepackage{graphicx}
\usepackage{enumerate,  enumitem,  amscd}
\usepackage{amsthm,  amssymb,  epsfig, amsmath}  
\usepackage[hidelinks]{hyperref}
\usepackage{mathtools}
\usepackage{tikz}
\usepackage{tikz-cd}
\usetikzlibrary{positioning, arrows,automata,arrows.meta}
\usepackage{quiver}
\usepackage{subcaption}
 
\tikzset{
  vx/.style  = {circle, draw=black, fill=blue!20, line width=0.6pt,
                minimum size=8pt, inner sep=0pt},
  ed/.style  = {draw=black!60, line width=0.5pt},
  lbl/.style = {font=\tiny},
  ttl/.style = {font=\small}
}

\newcommand{\pres}[3]{\textnormal{#1} \langle #2 \mid #3 \rangle}

\newcommand{\ZZ}{\mathcal{Z}}

\newcommand{\Z}{\mathbb{Z}}

\newcommand{\N}{\mathbb{N}}
\newcommand{\ab}{\mathrm{ab}}

\newcommand{\cF}{\mathcal{F}}
\newcommand{\cC}{\mathcal{C}}
\newcommand{\cG}{\mathcal{G}}
\newcommand{\cX}{\mathcal{X}}

\newcommand{\cT}{\mathcal{T}}
\newcommand{\cL}{\mathcal{L}}

\DeclareMathOperator{\Stab}{Stab}

\DeclareMathOperator{\im}{im}

\DeclareMathOperator{\Fix}{Fix}

\newtheorem{theorem}{Theorem} 
\newtheorem*{theorem*}{Theorem} 
\numberwithin{theorem}{section}
\newtheorem{lemma}[theorem]{Lemma}     

\newtheorem*{corollary*}{Corollary}
\newtheorem{proposition}[theorem]{Proposition}
\newtheorem*{proposition*}{Proposition}
\newtheorem{question}{Question}
\newtheorem*{question*}{Question}
\numberwithin{question}{section}

\theoremstyle{definition}

\numberwithin{example}{section}

\newtheorem*{remark*}{Remark}

\begin{document}

\title[Free-by-cyclic groups are not relatively profinitely rigid]{Free-by-cyclic groups are not relatively \\ profinitely rigid}
\author{Carl-Fredrik Nyberg-Brodda}
\address{June E Huh Center for Mathematical Challenges, Korea Institute for Advanced Study (KIAS), Seoul 02455, Korea}
\email{cfnb@kias.re.kr}

\thanks{The author is supported by KIAS Individual Grant HP094701 at Korea Institute for Advanced Study, and by the Mid-Career Researcher Program (RS-2023-00278510) through the National Research Foundation of the Government of Korea.}

\date{\today}

\keywords{}
\subjclass[2020]{}

\begin{abstract} 
For $m=5$ and for every $m \geq 7$ we construct families of pairwise non-isomorphic free-by-cyclic groups isomorphic to $F_{2m} \rtimes \Z$ having isomorphic profinite completions. This answers a question of Bridson \& Reid from 2015. 
\end{abstract}

\maketitle

\noindent The question of to which extent a finitely generated, residually finite group can be determined up to isomorphism by its finite quotients -- the question of \textit{profinite rigidity} -- is a classical area of group theory that has recently seen a great deal of activity, with many breakthrough results. Deep work by Bridson, McReynolds, Reid \& Spitler \cite{BMRS} has given the first examples of profinitely rigid hyperbolic $3$-manifold groups and a toolbox for constructing more, with many other profinite rigidity results following in its wake, see e.g.\ \cite{BMRS21,Bridson2022}.

Rather than consider profinite rigidity relative to all finitely generated residually finite groups, one can restrict to consider \textit{relative} profinite rigidity. Thus, for a class $\cC$ of finitely generated residually finite groups, a group $G \in \cC$ is \textit{profinitely rigid relative to $\cC$} if any group $H \in \cC$ with the same finite quotients as $G$ is necessarily isomorphic to $G$. In recent years, the question of profinite rigidity relative to the class of \textit{(f.g.\ free)-by-cyclic groups} has been studied. This was initiated by Bridson \& Reid \cite{Bridson2020} (preprint published in 2015) who in 2015 asked whether free-by-cyclic groups are determined by their finite quotients. Many articles have targeted this question, including recent work by Bridson, Reid \& Wilton \cite{Bridson2017}, Hughes \& Kudlinska \cite{Hughes2025}, and Bridson \& Piwek \cite{Bridson2025}. Many of these results point towards rather than away from free-by-cyclic groups being relatively profinitely rigid. 

In quite the opposite direction, we will in this article show that free-by-cyclic groups are \textit{not} relatively profinitely rigid, thus answering Bridson \& Reid's question negatively. Specifically, we will construct a family of groups $G_{p,q}$ for $p, q \in \Z$ with the following properties:

\begin{theorem*}
For every $p \geq 2$ and $q, r \in (\Z / p\Z)^\times$, we have:
\begin{enumerate}[label=\normalfont(\Roman*)]
\item $\widehat{G_{p,q}} \cong \widehat{G_{p,r}}$,
\item $G_{p,q} \cong F_{2p} \rtimes \Z$ is free-by-cyclic, and
\item $G_{p,q} \cong G_{p,r}$ if and only if $q \equiv \pm r \pmod{p}$.
\end{enumerate}
\end{theorem*}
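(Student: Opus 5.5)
We do not yet know the definition of $G_{p,q}$. The natural guess, following the classical Baumslag-type examples of non-isomorphic groups with isomorphic profinite completions, is a construction of the following kind. Start with a finite-by-cyclic or a free-by-cyclic base whose monodromy has finite order $p$ on some finite quotient. Then define $G_{p,q}$ as a semidirect product $F \rtimes_{\phi^q} \Z$, or as an amalgam/HNN built from $\Z/p$-data twisted by $q$. We organise the proof into the three claims (I)–(III) and treat each with the standard tool.

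\textbf{Step (II).} First we exhibit $G_{p,q}$ explicitly as $F_{2p}\rtimes_{\psi_q}\Z$. The cleanest route: $G_{p,q}$ should contain a finite-index or normal free subgroup with cyclic quotient. Concretely, find an epimorphism $G_{p,q}\to\Z$ whose kernel is finitely generated. By Brown's criterion (or a direct Reidemeister--Schreier computation for a one-relator or HNN presentation), this kernel is free. An Euler characteristic count then shows it has rank $2p$. Alternatively, we write down an explicit automorphism $\psi_q$ of $F_{2p}$ and verify the presentations agree.

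\textbf{Step (I).} Here we use the standard criterion for semidirect products $N\rtimes_\alpha\Z$ and $N\rtimes_\beta\Z$ with $N$ finitely generated and residually finite. Their profinite completions are isomorphic as soon as $\beta = \alpha^{k}$ for some $k\in\widehat{\Z}^\times$ in $\mathrm{Out}(\widehat N)$, up to conjugacy. The model is Baumslag's argument for $\Z/25\rtimes\Z$; in general it gives $\widehat N\rtimes_{\alpha}\widehat\Z \cong \widehat N\rtimes_{\alpha^k}\widehat\Z$ by reparametrising the $\widehat\Z$ factor via multiplication by $k$. So the plan is to arrange that $\psi_r$ is conjugate in $\mathrm{Out}(\widehat{F_{2p}})$ to $\psi_q^{k}$, where $k\in\widehat\Z^\times$ reduces to $r q^{-1}$ modulo $p$. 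The natural way is to make $\psi_q=\theta\circ\sigma^q$, where $\sigma$ has order $p$ (for instance a cyclic permutation of $p$ blocks of two generators) and $\theta$ commutes with $\sigma$. We may also get (I) more directly by showing that the finite quotients of $G_{p,q}$ depend only on the subgroup $\langle q\rangle=(\Z/p)^\times$-orbit data, using that $\widehat\Z^\times\to(\Z/p)^\times$ is surjective. I expect the precise choice of $k$ and the check on the non-finite-order part $\theta$ to be delicate. The device is to choose $k\equiv 1$ modulo a large power so that $\theta^k$ stays compatible, or to have $\theta$ act trivially where needed.

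\textbf{Step (III), the main obstacle.} Proving non-isomorphism requires an invariant of the discrete group that distinguishes $q$ from $r$ up to sign. I would try the following approach. Compute the BNS invariant or Thurston/Alexander norm to pin down which epimorphisms $G_{p,q}\to\Z$ have finitely generated kernel; ideally the fibred class is unique up to sign (for instance if $b_1(G_{p,q})=1$). Then any isomorphism $G_{p,q}\cong G_{p,r}$ carries the fibre to the fibre. It therefore induces $\psi_q^{\pm1}\sim\psi_r$ in $\mathrm{Out}(F_{2p})$, and it remains to show that this forces $q\equiv\pm r$. For that we would use a computable conjugacy invariant of the outer automorphism. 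Examples are the action on $H_1(F_{2p};\Z)$ twisted by the order-$p$ part, so that the characteristic polynomial or the $\Z[\Z/p]$-module structure records $q$ modulo $\pm$, or the twisted Alexander polynomial of $G_{p,q}$ with respect to a $\Z/p$ representation, which changes by Galois action $\zeta\mapsto\zeta^{q}$. The converse direction, $q\equiv\pm r\Rightarrow$ isomorphic, is easy: replace $t$ by $t^{-1}$, and use that $G_{p,q}$ depends only on $q$ mod $p$.
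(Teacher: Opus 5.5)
\textbf{This is a template, not a proof, and the template fails at (III).} You never use the actual definition of $G_{p,q}$: the HNN extension of $A_{p,q}=\langle a,b,c,h \mid h \text{ central},\ a^ph^q=b^ph^{-q}=1\rangle$ along $s^{-1}cs=abc$. Every step depends on a hypothetical monodromy $\psi_q=\theta\sigma^q$ that is never produced.

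For (I), the mechanism in the paper is not a finite-order twist of a monodromy but a twist of the central element $h$. Given $G_{p,r}\twoheadrightarrow Q$, choose $t$ with $qt\equiv r\pmod p$ and $\gcd(t,|Q|)=1$, and set $\gamma=(r-qt)/p$. Then $a\mapsto\widetilde a\widetilde h^{\gamma}$, $b\mapsto\widetilde b\widetilde h^{-\gamma}$, $h\mapsto\widetilde h^{t}$, $c\mapsto\widetilde c$, $s\mapsto\widetilde s$ is a homomorphism $G_{p,q}\to Q$. It is onto because $t$ is invertible mod $|Q|$. Working one finite quotient at a time with an integer $t$ avoids the $\theta^k$ difficulty you flag.

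For (II), your outline (epimorphism to $\Z$, free kernel, Euler characteristic) matches the paper, but all the content is in the choice $\eta(a)=-q$, $\eta(b)=q$, $\eta(c)=1$, $\eta(h)=p$, $\eta(s)=0$. Bass--Serre theory then gives $\ker\eta\cong N*\Z$: one vertex, one loop, and trivial edge group since $\eta(c)=1$. Here $N\cong\ker(B_p\to\Z/p\Z)$ is free by Kurosh precisely because $\gcd(p,q)=1$ makes this map injective on both $C_p$ factors. Its rank is $2p-1$ since $\chi=p(2/p-2)$. Brown's criterion does not do this job: it concerns one-relator groups, which $G_{p,q}$ is not (it has $\chi=0$ and $b_1=3$), and it gives finite generation, not freeness.

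The decisive gap is (III). Your route needs the fibred class to be unique up to sign, but $b_1(G_{p,q})=3$ and there are infinitely many fibrations. Let $\sigma\colon G_{p,q}\to\Z$ kill $A_{p,q}$ and send $s\mapsto 1$. Then every $\eta+n\sigma$ restricts to $\eta$ on $A_{p,q}$, takes the value $1$ on $c$, and so has the same Bass--Serre description with kernel $F_{2p}$. An abstract isomorphism therefore need not carry your fibre to your fibre, and the conjugacy class of a monodromy in $\mathrm{Out}(F_{2p})$ is not a group invariant.

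Your proposed invariants are also of the wrong kind. A $\Z/p$-twisted quantity that "changes by $\zeta\mapsto\zeta^{q}$" cannot separate $q$ from $r$. An abstract isomorphism may precompose your representation $G\to\Z/p$ with any automorphism of $\Z/p$, so only Galois orbits are invariants, and these orbits are the same for every unit $q$.

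The paper instead extracts discrete structure from the HNN decomposition:
\begin{itemize}
\item $C_G(h^m)=A$, $C_G(c^m)\cong F_2\times\Z$, and hyperbolic elements have abelian centralizers.
\item Hence, after conjugating, any isomorphism maps $A_{p,q}$ onto $A_{p,r}$, so $h_q\mapsto h_r^{\pm1}$.
\item It also maps $c_q$ and $a_qb_qc_q$ to conjugates of $c_r^{\pm1}$ and $(a_rb_rc_r)^{\pm1}$, possibly swapped.
\item In $B_p^{\ab}$ this forces $\overline\Phi(a_q)$ to be conjugate to $a_r^{\pm1}$ or $b_r^{\pm1}$.
\item Taking $p$-th powers against $a_q^p=h_q^{-q}$, $a_r^p=h_r^{-r}$ and $b_r^p=h_r^{r}$ gives $q\equiv\pm r\pmod p$.
\end{itemize}
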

In particular, as soon as $\frac{\phi(p)}{2}>1$, we get two non-isomorphic free-by-cyclic groups with the same profinite completion. The smallest such $p$ is $5$, so that $G_{5,1} \not\cong G_{5,2}$, but they have the same finite quotients, and are both free-by-cyclic groups of the form $F_{10} \rtimes \Z$. As soon as $p \geq 7$ we have $\phi(p) \geq 4$, and hence we get the result announced in the abstract. Furthermore, the size of the relative profinite genus of $G_{p,q}$ grows unboundedly in $p$, i.e.\ for every $N \in \N$ there exists a free-by-cyclic group with relative profinite genus of cardinality larger than $N$. 

A peculiarity of the proof of our Theorem is that the proofs of its three parts (I), (II), and (III) are all more or less independent of one another. We will give an outline of the components of the proof of the Theorem in \S\ref{Subsec:crossroads} below, including a brief discussion of the order(s) in which the sections of this article may be read.

\subsection*{Acknowledgements} I wish to thank Martin Bridson and Alan Reid for encouragement and helpful discussions. Several steps in the proof were simplified with the use of GPT-5.6 (Sol) accessed via ChatGPT Pro. All words in this article were written by it (human) author.

\section{Background and crossroads}\label{Sec:background}

\subsection{Profinite rigidity} \label{Subsec:profinite-rigidity-back}

We give some background on profinite rigidity; for (much) more, we refer the reader to Reid's 2018 ICM address on the topic \cite{Reid2018}. Let $G$ be a finitely generated residually finite group, let $\cF(G)$ be the set of all finite quotients of $G$, given the structure of an inverse system in the natural way. The profinite completion $\widehat{G}$ of $G$ is then the inverse limit of this system. We say that $G$ is (absolutely) \textit{profinitely rigid} if it is uniquely distinguished by its set of finite quotients $\cF(G)$ amongst all other finitely generated, residually finite groups. This condition $\cF(G_1) = \cF(G_2)$ is well-known to be equivalent to $\widehat{G}_1 \cong \widehat{G}_2$ \cite{Dixon1982, Nikolov2007}. The \textit{genus} $\cG(G)$ of $G$ is the set of isomorphism classes of finitely generated, residually finite groups with the same profinite completion as $G$. 

On the other hand, if we fix a set $\cC$ of isomorphism types of finitely generated groups (e.g.\ the set of f.g.\ free groups or the set of one-relator groups), then we say that $G \in \cC$ is \textit{profinitely rigid relative to $\cC$} if $\widehat{G} \cong \widehat{H}$ implies $G \cong H$ for every $H \in \cC$. This leads to defining $\cG(\cC, G)$, the \textit{genus relative to $\cC$}, as $\cG(G) \cap \cC$. Thus profinite rigidity relative to $\cC$ asks whether or not $\cG(\cC, \cG) = \{ G \}$. The relative genus was first introduced by Grunewald \& Zalesskii \cite{Grunewald2011}. In general, the (absolute) genus of $G$ can be quite different from its relative genera. For example, all free groups are profinitely rigid relative to the set of free groups, since the finite quotients of a rank-$n$ free group $F_n$ are precisely the $n$-generated finite groups. On the other hand, the \textit{absolute} profinite rigidity (i.e.\ profinite rigidity relative to the class of all finitely generated groups) of free groups remains a famous open problem \cite[Question~12]{Noskov1979}.

\subsection{Free-by-cyclic groups}\label{Subsec:free-by-cyclic}

Recall that a group $G$ is said to be \textit{(f.g.\ free)-by-cyclic} if it fits into a short exact sequence $1 \to F_n \to G \to \Z \to 1$, where $F_n$ is a finitely generated free group; since all such extensions split, this is equivalent to saying $G \cong F_n \rtimes \Z$ for some action of $\Z$. We will use the term \textit{free-by-cyclic} throughout this article as an abbreviation for this definition; recent results of Linton \cite{Linton2026} (namely that finitely generated \{free-by-cyclic\} groups are precisely the finitely generated subgroups of \{f.g.\ free\}-by-cyclic groups) show that this is only somewhat abusive. Baumslag \cite{Baumslag1971} showed that all free-by-cyclic groups are residually finite, so the question of profinite rigidity is a very natural one for this class. In the sequel, when we speak of the relative profinite rigidity of free-by-cyclic groups, we will always mean profinite rigidity relative to the class of all free-by-cyclic groups.

Bridson, Reid \& Wilton \cite{Bridson2017} proved, among other results, that the groups $F_2 \rtimes \Z$ are relatively profinitely rigid.  Recently, there have been several further results in this direction. Hughes \& Kudlinska \cite{Hughes2025} proved that many classes of free-by-cyclic groups are \textit{almost} relatively profinitely rigid (i.e.\ that they have finite relative genus) -- the largest such class being the class of irreducible free-by-cyclic groups with first Betti number $b_1(G)=1$. Bridson \& Piwek \cite{Bridson2025} recently proved that all free-by-cyclic groups with non-trivial center are relatively profinitely rigid, and Andrew, Hillen, Lyman \& Pfaff \cite{Andrew2025} recently gave an example of a \textit{hyperbolic} relatively profinitely rigid free-by-cyclic group.

\subsection{The groups $G_{p,q}$} We will now define the free-by-cyclic groups $G_{p,q}$ that constitute the core of the main Theorem. First, fix $p \geq 2$, let $q \in \Z$, and let $B_p = C_p \ast C_p \ast \Z = \langle a, b, c \mid a^p = b^p = 1 \rangle$. We add a central element $h$, and define a central extension $A_{p,q}$ of $B_p$ by 
\begin{equation}\label{Eq:Apq-definition}
A_{p,q} = \pres{}{a,b,c,h}{[h,a] = [h,b] = [h,c] = 1, \: a^p h^{q} = 1, \: b^p h^{-q} = 1}.
\end{equation}
We will intentionally keep the same names $a, b, c$ for the generators of $A_{p,q}$ as for $B_p$, as it will simplify the notation used throughout the article greatly. To ensure no confusion occurs, we will always make the ambient group clear.

Next, consider the element $abc$ of $A_{p,q}$. Both $c$ and $abc$ have infinite order in $A_{p,q}$, since $c$ and $abc$ both project to infinite-order elements in the abelianization of $A_{p,q}$. Thus, we can form an HNN extension of $A_{p,q}$ with the associated (cyclic) subgroups $\langle c \rangle$ and $\langle abc \rangle$, and a new stable letter $s$. We call this extension $G_{p,q}$, so that
\begin{equation}\label{Eq:Gpq-definition}
G_{p,q} = \pres{}{A_{p,q}, s}{s^{-1} c s = abc}
\end{equation}
These are the groups that constitute the groups in our main Theorem. 

\subsection{Crossroads}\label{Subsec:crossroads}

In this article, there are three things to be proved, being the three statements (I), (II), and (III) of the main Theorem. In presenting our proofs of these three parts, we place the reader at a crossroads: the proof of each of the three pieces is independent of every other, and they can thus be read in any order, giving the reader the choice between one of six possible articles. It is the view of the author that the choice of order (I), (II), (III) is the most natural, since it follows the approximate order of difficulty: the first part (I) is not difficult, and uses only elementary combinatorial group theory via generators and relations; the second part (II) uses mostly routine techniques, but requires a touch of Bass--Serre theory; while the third part (III) requires a fairly heavy dose of Bass--Serre theory, following Serre \cite{Serre}. The outline of the article with this choice of order, together with a sketch of each proof, is as follows: 
\begin{enumerate}[label=(\Roman*)]
\item In \S\ref{Sec:same-profinite}, we prove that the groups $G_{p,q}$ and $G_{p,r}$ have the same finite quotients. This is achieved by elementary number theoretic considerations, allowing any surjective homomorphism $G_{p,r} \to Q$ onto some finite group $Q$ to induce a homomorphism $G_{p,q} \to Q$, which then becomes surjective if one assumes that $\gcd(p, q) = \gcd(p, r) = 1$. 
\item In \S\ref{Sec:groups-are-freebycyclic}, we prove that $G_{p,q}$ is free-by-cyclic. To do this, we obtain a homomorphism $\eta \colon G_{p,q} \to \Z$, and then study its kernel in two steps. First, by using some Bass--Serre theory, we prove that the kernel is isomorphic to $N \ast \Z$, where $N = \ker(\eta) \cap A_{p,q}$. Since $A_{p,q}$ is a central extension of $B_p$, this intersection is then quite easily reduced to a problem about a certain finite index subgroup of $B_p$. The group $B_p$ is a virtually free group, and so an Euler characteristic argument then lets us conclude that $N \cong F_{2p-1}$. Thus the kernel becomes isomorphic to $F_{2p}$, so that $G_{p,q} \cong F_{2p} \rtimes \Z$. 
\item In \S\ref{Subsec:rarely-isomorphic}, the most technically involved of the three parts, we prove that $G_{p,q} \cong G_{p,r}$ if and only if $q \equiv \pm r \pmod{p}$. The converse implication is not difficult (and not necessary for our main Theorem), so the forward direction constitutes the entire difficulty. On a high level, we will show that any isomorphism $G_{p,q} \cong G_{p,r}$ gives rise to a very rigid isomorphism $\Phi \colon G_{p,q} \to G_{p,r}$, and that this rigidity is enough to recover the numbers $q$ and $r$ (up to sign mod $p$). In detail, we start the section with a detailed analysis of the centralizers in $G_{p,q}$ and $A_{p,q}$ via Bass--Serre theory. We then assume that $G_{p,q} \cong G_{p,r}$, and using the centralizer information, we show that there must exist some isomorphism $\Phi \colon G_{p,q} \to G_{p,r}$ which maps $\Phi(A_{p,q}) = A_{p,r}$ (Lemma~\ref{Lem:iso-descends-to-A}). Thus we can reconstruct the base group of the HNN extension; we next show that any such $\Phi$ must, up to conjugacy, also either fix the two associated subgroups $\langle c \rangle$ and $\langle abc \rangle$ or interchange them. That is then sufficient information to, via a simple elementary number-theoretic argument (and information about conjugate subgroups in the virtually free group $B_p$), show that we necessarily have $q \equiv \pm r \pmod{p}$. 
\end{enumerate}

We end this section with two open problems. First, as noted in the introduction, our main Theorem shows that taking $p=5$ already gives a pair of non-isomorphic free-by-cyclic groups $F_{10} \rtimes \Z$ with the same profinite completion. We do not believe that this rank 10 is by any means optimal, but as mentioned in \S\ref{Subsec:free-by-cyclic} it is known that all groups $F_2 \rtimes \Z$ are relatively profinitely rigid. This raises the following natural question:

\begin{question}
What is the greatest $2 \leq m \leq 9$ such that all free-by-cyclic groups $F_m \rtimes \Z$ are relatively profinitely rigid? 
\end{question}

Finally, as mentioned in the introduction, although the lower bounds for the relative genera we construct in this article can be arbitrarily large, the bounds are nevertheless always finite. Thus we ask:

\begin{question}\label{Quest:finite}
Is there some free-by-cyclic group $G$ such that the relative genus of $G$ is infinite? In particular, is the relative genus of some $G_{p,q}$ infinite? 
\end{question}

Our groups are reducible with first Betti number $3$, so we do not have any concrete tools to know whether their relative genus is finite or not (cf.\ the results by Hughes \& Kudlinska \cite{Hughes2025}). We do not believe that the methods in this article are particularly suited for this question. 

\clearpage

\section{The groups have isomorphic profinite completions}\label{Sec:same-profinite}

\noindent In this section, we will prove that $\widehat{G_{p,q}} \cong \widehat{G_{p,r}}$ whenever $q, r \in (\Z / p\Z)^\times$, which is the easiest part of our argument; it requires little more than elementary number theory and manipulating some defining relations. As noted in the introduction, to do this it suffices to prove that $G_{p,q}$ and $G_{p,r}$ have precisely the same set of finite quotients. 

\begin{proposition}\label{Prop:same-quotients}
Let $p \geq 2$. For all $q, r \in (\Z / p\Z)^\times$, the groups $G_{p,q}$ and $G_{p,r}$ have the same finite quotients, i.e.\ $\widehat{G_{p,q}} \cong \widehat{G_{p,r}}$. 
\end{proposition}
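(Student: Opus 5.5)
Since both groups are finitely generated, it is enough to show that every finite quotient of $G_{p,r}$ is a quotient of $G_{p,q}$; the reverse inclusion follows by symmetry. So fix a surjection $\psi\colon G_{p,r}\to Q$ onto a finite group $Q$, and write $\alpha,\beta,\gamma,\eta,\sigma$ for the images of $a,b,c,h,s$. In $Q$ these satisfy the following relations:
\begin{itemize}
\item $\eta$ is central in $\langle\alpha,\beta,\gamma\rangle$;
\item $\alpha^p\eta^r=\beta^p\eta^{-r}=1$;
\item $\sigma^{-1}\gamma\sigma=\alpha\beta\gamma$;
\item $Q=\langle\alpha,\beta,\gamma,\eta,\sigma\rangle$.
\end{itemize}
The plan is to find a new image $\eta'$ for $h$, replacing $\eta$ by a suitable power, such that $a\mapsto\alpha$, $b\mapsto\beta$, $c\mapsto\gamma$, $h\mapsto\eta'$, $s\mapsto\sigma$ respects the relations of $G_{p,q}$. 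We then need this homomorphism to still be surjective.

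Let $n$ be the order of $\eta$ and write $n=p^e m$ with $\gcd(m,p)=1$. The key observation is that $\eta^p$ is a power of $\alpha$: it lies in $\langle \alpha^{p^2}\rangle$, since $\eta^r=\alpha^{-p}$ and $r$ is a unit modulo $p$. I want to choose an integer $k$ with two properties:
\begin{enumerate}[label=(\alph*)]
\item $kq\equiv r \pmod n$, so that $\eta':=\eta^k$ satisfies $\alpha^p\eta'^{q}=\alpha^p\eta^r=1$, and similarly $\beta^p\eta'^{-q}=1$;
\item $\gcd(k,n)=1$, so that $\langle\eta'\rangle=\langle\eta\rangle$ and the map is onto.
\end{enumerate}
Both conditions can be met if $\gcd(q,n)=1$, by taking $k\equiv q^{-1}r \pmod n$. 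This requires $\gcd(r,n)=1$ as well, which holds because $r$ is already a unit in the relevant sense.

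The main obstacle is that $n$ may share prime factors with $q$ or $r$ other than those of $p$. Only $q,r\in(\Z/p\Z)^\times$ is assumed, so we may change the integer representatives of $q$ and $r$ by multiples of $p$, but $n$ is arbitrary.

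To deal with this, I would first show that $A_{p,q}$ depends only on $q\bmod p$, up to isomorphism, so that $G_{p,q}$ does too. The substitution $h\mapsto h\,a^{p}$ does not quite do this, so instead one uses that $a^{p}$ is central and changes the central generator. Concretely, $h_{\text{new}}=h a^{t}$ does not work because $a^t$ is not central. A cleaner route avoids the issue entirely: choose the representative $q'$ of $q \bmod p$ with $\gcd(q',n)=1$ (Dirichlet, or simply CRT on the primes dividing $n$ but not $p$), and verify directly that $G_{p,q}\cong G_{p,q'}$. If that isomorphism is problematic, the alternative is to work with the subgroup $\langle \eta\rangle$ and its decomposition into a $p$-part and a $p'$-part. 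On the $p'$-part of $\langle\eta\rangle$ one has the relations $\alpha^p\eta^r=1$ and $\alpha^{p}$ central, so $\eta$'s $p'$-component is determined by $\alpha$ and $r$ through a unit rescaling. This lets us choose $\eta'$ componentwise, with the $p$-part handled by $q\equiv$ unit mod $p$ and the $p'$-part handled by redefining via powers of $\alpha^p$.

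I expect this bookkeeping on the order of $\eta$ to be the only delicate point. Surjectivity then follows from (b), and the HNN relation is untouched.
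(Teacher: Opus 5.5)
Your plan of keeping the images of $a,b,c,s$ fixed and replacing only $\eta$ by a power $\eta^k$ does not work in general. The two claims meant to rescue it are false: neither ``$\gcd(r,n)=1$ because $r$ is a unit in the relevant sense'' nor ``$\eta^p\in\langle\alpha^{p^2}\rangle$'' follows from $\eta^r=\alpha^{-p}$.

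Take $p=5$, $q=1$, $r=2$, and the surjection $G_{5,2}\to C_2=\langle g\rangle$ sending $h\mapsto g$ and $a,b,c,s\mapsto 1$. All relations hold since $g^2=1$. Here $n=2=r$, $\alpha=1$ and $\eta^5=g$. In $G_{5,1}$ the relation $a^5h=1$ together with $a\mapsto\alpha=1$ forces $h\mapsto 1$, so every map of your shape has trivial image.

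The same example defeats your fallback: the $p'$-part of $\eta$ is $g$ itself, and it is not determined by $\alpha=1$. You correctly identified the obstacle (primes of $n$ dividing $q$ or $r$ but not $p$), but you do not overcome it. Your ``cleaner route'' depends on an isomorphism $G_{p,q}\cong G_{p,q'}$ for $q\equiv q'\pmod p$ that you were unable to produce. Even granting it, you would need to replace $r$ as well as $q$ by a representative coprime to $n$.

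The missing idea is to twist the images of $a$ and $b$ by central elements, rather than modifying $h$. In $A_{p,q}$ put $a'=ah^{\gamma}$ and $b'=bh^{-\gamma}$. Since $h$ is central, $a'b'c=abc$, and the relations become $a'^{p}h^{q-p\gamma}=1=b'^{p}h^{-(q-p\gamma)}$. This gives $G_{p,q}\cong G_{p,q-p\gamma}$ with $c,h,s$ fixed.

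The paper folds this twist and your power trick into a single map:
\begin{itemize}
\item choose $t$ by the Chinese Remainder Theorem with $qt\equiv r\pmod p$ and $\gcd(t,|Q|)=1$, and set $\gamma=(r-qt)/p$;
\item send $a_q\mapsto\tilde a_r\tilde h_r^{\gamma}$, $b_q\mapsto\tilde b_r\tilde h_r^{-\gamma}$, $c_q\mapsto\tilde c_r$, $h_q\mapsto\tilde h_r^{t}$ and $s_q\mapsto\tilde s_r$;
\item the relations hold because $p\gamma+qt=r$, and surjectivity follows from $\gcd(t,|Q|)=1$.
\end{itemize}
In the example above, $t=7$ and $\gamma=-1$, so $a$, $b$ and $h$ all map to $g$.

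With the twist in hand, your cleaner route also goes through. Replace both $q$ and $r$ by representatives coprime to $n$; the transported surjection still sends $h$ to $\eta$. Then $k\equiv q'^{-1}r'\pmod n$ is a unit.
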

\begin{proof}
Let $Q$ be any finite group such that $G_{p, r} \twoheadrightarrow Q$. We will show that $G_{p,q}$ also surjects $Q$. First, fix some $t \in \Z$ such that 
\[
qt \equiv r \pmod{p} \quad \text{and} \quad \gcd(t, |Q|) = 1.
\]
Such $t$ is easy to construct. Indeed, we require $t \equiv rq^{-1} \pmod{p}$, and for every prime $\ell \mid |Q|$ with $\ell$ not dividing $p$ we can add the condition $t \equiv 1 \pmod{\ell}$. Then by the Chinese Remainder Theorem, there exists a $t$ satisfying these conditions simultaneously, which thus satisfies both $qt \equiv r \pmod{p}$ and $\gcd(t, |Q|) =1$. Let then $\gamma = \frac{r-qt}{p}$, which by construction satisfies $\gamma \in \Z$. 

Let the generators of $G_{p,q}$ be $a_q, b_q, c_q, h_q, s_q$, and let $\widetilde{a}_r, \widetilde{b}_r, \widetilde{c}_r, \widetilde{h}_r, \widetilde{s}_r$ denote the images under the given surjection of the generators of $G_{p,r}$ in $Q$. Then we define a map $\Phi \colon G_{p,q} \to Q$ by 
\begin{align}\label{Eq:im-of-h_q-generators}
a_q \mapsto \widetilde{a}_r \widetilde{h}_r^\gamma, \quad b_q \mapsto \widetilde{b}_r & \widetilde{h}_r^{-\gamma}, \quad c_q \mapsto \widetilde{c}_r, \quad h_q \mapsto \widetilde{h}_r^{t}, \quad s_q \mapsto \widetilde{s}_r.
\end{align} 
Let us verify that this is a homomorphism. First, all centrality relations regarding $h_{q}$ from $A_{p,q}$ are certainly satisfied, since $\Phi(h_q) = \widetilde{h}_r^{t}$ commutes with $\widetilde{a}_r, \widetilde{b}_r$, and $\widetilde{c}_r$. Next, the two relations $a_q^p h_q^q = 1$ and $b_q^p h_q^{-q}=1$ get mapped to, respectively
\begin{align*}
\Phi(a_q^p h_q^q) &= (\widetilde{a}_r \widetilde{h}_r^\gamma)^p \widetilde{h}_r^{tq} = \widetilde{a}_r^{p} \widetilde{h}_r^{\gamma + qt} = \widetilde{h}_r^{-r + p\gamma + qt} =  \widetilde{h}_r^{0} = 1, \\ 
\Phi(b_q^p h_q^{-q}) &= (\widetilde{b}_r \widetilde{h}_r^{-\gamma})^p \widetilde{h}_r^{-tq} = \widetilde{b}_r^{p} \widetilde{h}_r^{-p\gamma - tq} = \widetilde{h}_r^{r-p\gamma - qt} = \widetilde{h}_r^0 = 1,
\end{align*}
where we have made ample use of the centrality of $\widetilde{h}_r$ together with the facts that $\widetilde{a}_r^p = \widetilde{h}_r^{-r}$ and $\widetilde{b}_r^p=\widetilde{h}_r^r$, both of which follow from the fact that the map onto $Q$ is a homomorphism. Hence $\Phi$ respects these relations. Thus it remains only to check that the HNN relation $s_q^{-1}c_qs_q = a_q b_q c_q$ is satisfied. This is also straightforward: indeed, $\Phi(c_q) = \widetilde{c}_r$, and $\Phi(a_q b_q c_q) = (\widetilde{a}_r \widetilde{h}_r^\gamma)(\widetilde{b}_r \widetilde{h}_r^{-\gamma}) \widetilde{c}_r = \widetilde{a}_r \widetilde{b}_r \widetilde{c}_r$, using centrality of $\widetilde{h}_r$. Hence $s_q^{-1}c_qs_q = a_q b_q c_q$ is mapped to $\widetilde{s}_r^{-1}\widetilde{c}_r\widetilde{s}_r = \widetilde{a}_r \widetilde{b}_r \widetilde{c}_r$, so  the relation $s_q^{-1} c_q s_q = a_qb_qc_q$ is also preserved by $\Phi$. Thus $\Phi$ is a homomorphism $G_{p,q} \to Q$. 

It remains to verify that $\Phi$ is surjective. Since $\gcd(t, |Q|)=1$ by assumption, there exists $k \in \Z$ with $tk \equiv 1 \pmod{|Q|}$, and then $\widetilde{h}_r = (\widetilde{h}_r^t)^k = \Phi(h_q^k)$, so that $\widetilde{h}_r \in \im(\Phi)$. But then it is easy to recover all other generators, too, by using \eqref{Eq:im-of-h_q-generators}. Hence all the images of the generators of $G_{p,r}$ lie in the image of $\Phi$, and since the map $G_{p,r} \to Q$ is surjective, hence so too is $\Phi$. Thus $G_{p,q}$ surjects $Q$, and since $Q$ was arbitrary, all finite images of $G_{p,r}$ are also finite images of $G_{p,q}$. Interchanging $q$ and $r$ in the above proof thus proves that the two groups have exactly the same finite quotients. 
\end{proof}

As an example of the parameters in Proposition~\ref{Prop:same-quotients}, if we take $p=5$ and $q=2, r=4$, and $|Q| = 30$, then we may choose $t = 7$ and $\gamma = -2$. 

\section{The groups are free-by-cyclic}\label{Sec:groups-are-freebycyclic}

\noindent In this section, we will prove that the group $G_{p,q}$ as defined in \eqref{Eq:Gpq-definition} is (f.g.\ free)-by-cyclic for all coprime $p, q \geq 2$. This requires some Bass--Serre theory and exploiting the fact that $G_{p,q}$ is an HNN extension, but is otherwise rather straightforward. Specifically, we will show the following proposition.

\begin{proposition}\label{Prop:they-are-free-by-cyclic}
Let $p,q \geq 2$ be coprime integers. Then the group $G_{p,q}$ is isomorphic to $F_{2p} \rtimes \Z$, and is in particular (f.g.\ free)-by-cyclic.
\end{proposition}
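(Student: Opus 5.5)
The plan is to write down an explicit surjection $\eta \colon G_{p,q} \to \Z$ and show directly that $\ker(\eta) \cong F_{2p}$. In the abelianization of $A_{p,q}$ the relations are $pa + qh = 0$ and $pb - qh = 0$. The HNN relation forces $\eta(c) = \eta(abc)$, i.e.\ $\eta(a) + \eta(b) = 0$. I will take
\[
\eta(a) = -q, \quad \eta(b) = q, \quad \eta(h) = p, \quad \eta(c) = 1, \quad \eta(s) = 0 .
\]
This respects all defining relations of $G_{p,q}$. Since $\gcd(p,q) = 1$, already $\eta(A_{p,q}) = \Z$, and also $\eta(\langle c \rangle) = \Z$. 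Put $K = \ker(\eta)$.

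\emph{Step 1: $K \cong N \ast \Z$ with $N = K \cap A_{p,q}$.} Let $K$ act on the Bass--Serre tree $T$ of the HNN extension \eqref{Eq:Gpq-definition}. Vertex orbits correspond to $K \backslash G_{p,q} / A_{p,q}$, which is a single point because $K A_{p,q} = G_{p,q}$ (as $\eta|_{A_{p,q}}$ is onto). Edge orbits correspond to $K \backslash G_{p,q} / \langle c \rangle$, which is a single point because $\eta(c) = 1$. So the quotient graph is one vertex with one loop. The vertex group is $K \cap A_{p,q} = N$, and the edge group is $K \cap \langle c \rangle = 1$. Hence $K \cong N \ast \Z$, the stable letter being a suitable element of $K$ (e.g.\ $s$ itself, since $\eta(s)=0$).

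\emph{Step 2: $N$ embeds in $B_p$ as a torsion-free subgroup of index $p$.} I need $h$ to have infinite order in $A_{p,q}$. This follows since $h$ maps to an infinite-order element in the abelianization (its image under $\eta$ is $p \neq 0$). Then $N \cap \langle h \rangle = 1$ because $\eta(h^k) = pk$. So $N$ maps injectively under $A_{p,q} \to B_p$. Its image is the preimage of $p\Z$ under $\eta$, viewed modulo $\langle h\rangle$. This is the kernel of the induced map $\bar\eta \colon B_p \to \Z/p\Z$ with $a \mapsto -q$, $b \mapsto q$, $c \mapsto 1$, so the image has index $p$. Every torsion element of $B_p$ is conjugate to a power of $a$ or $b$. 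Since $q$ is a unit mod $p$, $\bar\eta$ is injective on $\langle a \rangle$ and $\langle b \rangle$, so $\ker\bar\eta$ is torsion-free. By the Kurosh subgroup theorem it is free, and finitely generated since it has finite index.

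\emph{Step 3: counting the rank.} Using $\chi(B_p) = \tfrac1p + \tfrac1p + 0 - 2 = \tfrac{2}{p} - 2$, the index-$p$ subgroup has $\chi = 2 - 2p = 1 - \operatorname{rank}$. So $N \cong F_{2p-1}$, hence $K \cong F_{2p-1} \ast \Z \cong F_{2p}$, and $G_{p,q} \cong F_{2p} \rtimes \Z$.

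The main obstacle I expect is Step 2: verifying carefully that $N$ maps isomorphically onto $\ker\bar\eta$, and that the torsion of $B_p$ is exactly the conjugates of $\langle a\rangle$ and $\langle b\rangle$, so that torsion-freeness follows. Step 1 is routine once $\eta$ is chosen so that both double coset spaces are singletons.
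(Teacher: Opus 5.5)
Your proposal is correct and follows the paper's proof essentially step for step. It uses the same homomorphism $\eta$ and the same Bass--Serre argument giving $\ker(\eta) \cong N \ast \Z$, identifies $N$ with the index-$p$ kernel of the induced map $B_p \to \Z/p\Z$ in the same way, and finishes with the same Kurosh and Euler characteristic count giving $N \cong F_{2p-1}$. (One small misattribution: surjectivity of $\eta|_{A_{p,q}}$ comes from $\eta(c)=1$, not from $\gcd(p,q)=1$; coprimality is needed only to make $\bar\eta$ injective on $\langle a \rangle$ and $\langle b \rangle$.)
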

\begin{proof}
We define a map $\eta \colon G_{p,q} \to \Z$ by 
\begin{equation}\label{Eq:eta}
\eta(a) = -q, \quad \eta(b) = q, \quad \eta(c) = 1, \quad \eta(h) = p, \quad \eta(s) = 0.
\end{equation}
This is a homomorphism, since $\eta(a^p h^q) = \eta(b^p h^{-q}) = 0$, the centrality relations are obviously respected, and $\eta(abc) = -q+q+1 = 1 = \eta(c)$, so that the relation $s^{-1}cs = abc$ is preserved. Since $\eta(c)=1$, it is surjective, and since $c \in A_{p,q}$, the restriction $\eta |_{A_{p,q}} \colon A_{p,q} \to \Z$ is also surjective. 

Let $K = \ker(\eta)$. We will show that $K$ is free of rank $2p$. Let $N = K \cap A_{p,q}$, which is the kernel of $\eta|_{A_{p,q}}$. We first show that $K \cong N \ast \Z$. Recall that $G_{p,q}$ is an HNN extension of $A_{p,q}$ with associated cyclic subgroups $\langle c \rangle$ and $\langle abc \rangle$ and stable letter $s$. Let $\cT$ be the Bass--Serre tree of this HNN extension. Then $K$ acts on $\cT$, and the quotient graph $K \setminus \cT$ has vertices corresponding to double cosets $K \setminus G_{p,q} / A_{p,q}$. However, because $\eta(A_{p,q}) = \Z$, it follows that every element $g \in G_{p,q}$ can be written as $g = k a$ with $k \in K$ and $a\in A_{p,q}$, so that $G_{p,q} = K A_{p,q}$. Hence there is only one double coset $K \setminus G_{p,q} / A_{p,q}$, so that $K \setminus \cT$ has only one vertex. The edges of $K \setminus \cT$ correspond to the double cosets of the edge group of the HNN extension, i.e.\ $K \setminus G_{p,q} / \langle c \rangle$. Since $\eta(c) = 1$, there is exactly one such double coset: indeed, for any $g_1, g_2 \in G_{p,q}$, let $m = \eta(g_2) - \eta(g_1)$. Then $\eta(g_2 (g_1 c^m)^{-1}) = 0$, so $g_2 = kg_1 c^m$ for some $k \in K$. Hence $K g_1 \langle c \rangle = K g_2 \langle c \rangle$, so there is only one double coset $K \setminus G_{p,q} / \langle c \rangle$. Hence $K \setminus \cT$ has exactly one edge.

Finally, the stabilizer in $K$ of the standard edge is $K \cap \langle c \rangle$, but since $\eta(c^m) = m$ is zero if and only if $m=0$, it follows that $K \cap \langle c \rangle = 1$. Since $K$ is normal in $G_{p,q}$, every conjugate edge stabilizer is also trivial. Hence we have, in total, shown that $K \setminus \cT$ is a graph of groups with one vertex and one loop edge; the vertex group is $N = K \cap A_{p,q}$, and the edge group is trivial. Thus the fundamental group of this graph of groups $K \setminus \cT$ is isomorphic to the free product $N \ast \Z$, and hence $K \cong N \ast \Z$ by the fundamental theorem of Bass--Serre theory. 

We now show that $N = K \cap A_{p,q}$ is free of rank $2p-1$, where $K = \ker(\eta)$. First, note that $\langle h \rangle$ is a central subgroup in $A_{p,q}$, and we have the quotient
\[
B_p := A_{p,q} / \langle h \rangle \cong \pres{}{a,b,c}{a^p = b^p = 1} \cong C_p \ast C_p \ast \Z.
\]
We thus obtain a map $\eta_p \colon B_p \to \Z / p \Z$, by lifting each element of $B_p$ to a pre-image of $A_{p,q}$, applying $\eta$, and taking reduction mod $p$. Since $\eta(h) = p$, the $\eta$-value of any pre-image differs only by a multiple of $p$, so $\eta_p$ is a well-defined map; it maps $\eta_p(a) = -q$, $\eta_p(b) = q$, and $\eta_p(c) = 1$. We claim that the projection $A_{p,q} \to B_p$ above restricts to an isomorphism $N \cong \ker(\eta_p)$. To see that it is injective, note that $N \cap \langle h \rangle = 1$, since $\eta(h^m) = pm$ vanishes if and only if $m=0$. Hence the quotient by $\langle h \rangle$ leaves $N$ unaffected, and $N$ injects into $\ker(\eta_p)$. On the other hand, to show surjectivity, let $\overline{g} \in \ker(\eta_p)$, and let $g \in A_{p,q}$ be any lift. Since $\eta_p(\overline{g}) = 0$, the integer $\eta(g)$ is divisible by $p$. Hence $gh^{-\eta(g)/p}$ maps under $\eta$ to $\eta(g) - \eta(g) = 0$, so $gh^{-\eta(g)/p} \in N$, and it is a lift of $\overline{g}$, since it maps to $\overline{g}$ upon killing $h$. Thus $N \cong \ker(\eta_p)$. 

Finally, note that $B_p$ is a virtually free group, so computing $\ker(\eta_p)$ is now routine. Indeed, $\eta_p$ is injective on both of the free factors $C_p$ in $B_p$, since $\eta_p(a) = -q$ and $\eta_p(b) = q$ are both invertible modulo $p$ by assumption. Thus $\ker(\eta_p)$ meets every conjugate of either finite factor trivially. Hence, by the Kurosh subgroup theorem, $\ker(\eta_p)$, and hence also $N \cong \ker(\eta_p)$, is a free group. Its rank follows from an easy (rational) Euler characteristic argument. First, we have
\[
\chi(B_p) = \chi(C_p) + \chi(C_p) + \chi(\Z) - 2 = \frac{1}{p} + \frac{1}{p} + 0 - 2 = \frac{2}{p} - 2.
\]
Now $\eta_p$ is surjective onto $\Z / p\Z$, so $\ker(\eta_p)$ has index $p$ in $B_p$. Since the Euler characteristic of a finite index subgroup is that of the overgroup multiplied by the index of the subgroup, we find $\chi(N) = \chi(\ker(\eta_p)) = p \chi(B_p) = 2 - 2p$. We showed above that $N$ is free, and hence it must be free of rank $2p-1$. Since $K \cong N \ast \Z$, as shown above, we have $K \cong F_{2p}$. Since $G_{p,q}$ fits into the short exact sequence $1 \to K \to G_{p,q} \xrightarrow{\eta} \Z \to 1$, this completes the proof of Proposition~\ref{Prop:they-are-free-by-cyclic}.
\end{proof}

\section{The groups are rarely isomorphic}\label{Subsec:rarely-isomorphic}

\noindent In this section our goal is to prove the following result, which is both the most technical as well as the most important part of the article.

\begin{proposition}\label{Prop:rarely-isomorphic}
Fix $p \geq 2$ and let $q, r \in \Z$. Then 
\[
G_{p,q} \cong G_{p,r} \quad \iff \quad r \equiv \pm q \pmod{p}.
\]
\end{proposition}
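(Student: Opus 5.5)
The plan is to prove the easy direction by explicit isomorphisms and the hard direction by recovering the HNN structure canonically.

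\emph{Easy direction.} It suffices to treat $r=q+p$ and $r=-q$.

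For $r = q+p$, send $a\mapsto ah$, $b\mapsto bh^{-1}$ and fix $c,h,s$. Then $(ah)^p h^{q} = a^p h^{q+p}$ and $ab\mapsto ab$, so all relations are preserved and the inverse map is evident.

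For $r=-q$, I would exchange the roles of $a$ and $b$. Map $a\mapsto b'$, $b\mapsto a'$, $h\mapsto h'$, and $c\mapsto a'^{-1} c'$ (or conjugate appropriately) so that $abc\mapsto b'a'a'^{-1}c' = b'c'$. We need the HNN relation to become a conjugacy between $c'$-type and $a'b'c'$-type elements. A cleaner option is $h\mapsto h^{-1}$ combined with an inner-type automorphism of $B_p$ sending $a\mapsto b$, $b\mapsto b^{-1}ab$, $c\mapsto c$; this preserves $ab c\mapsto b\cdot b^{-1}ab\, c = abc$ and swaps the two torsion factors, so it takes $q$ to $-q$. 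I would check that this lifts to the central extension.

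\emph{Hard direction.} First I would analyse centralizers with Bass--Serre theory. In $A_{p,q}$, the element $h$ generates the centre. Elements of $A_{p,q}$ not conjugate into $\langle a,h\rangle$ or $\langle b,h\rangle$ have centralizer $\mathbb Z^2$, namely $\langle g, h\rangle$ for a root $g$. The subgroups $\langle a,h\rangle$ and $\langle b,h\rangle$ are cyclic by the coprimality needed. In $G_{p,q}$, the edge groups are cyclic and $h\notin\langle c\rangle$ up to conjugacy. From this I would characterise $A_{p,q}$ intrinsically, for instance as the centralizer of the (unique up to inversion and conjugacy) element $h$. The characterising feature of $h$ is that it is central in a vertex group but does not lie in any edge group, or that its centralizer is non-abelian and not virtually cyclic. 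This yields an isomorphism $\Phi$ with $\Phi(A_{p,q})=A_{p,r}$ after conjugation, and hence $\Phi(h)=h^{\pm1}$.

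Next, $\Phi$ maps the Bass--Serre tree equivariantly. Using that edge stabilizers are the maximal cyclic subgroups $\langle c\rangle$ and $\langle abc\rangle$, incident at the base vertex up to conjugacy, $\Phi$ sends $\{\langle c\rangle,\langle abc\rangle\}$ to conjugates of the same pair, possibly swapped. Passing to $B_p$, the induced automorphism $\bar\Phi$ of $C_p*C_p*\mathbb Z$ sends $a,b$ to conjugates of powers $a^{i},b^{j}$ (possibly swapped), and sends $c$ and $abc$ to conjugates of $c^{\pm1}$ and $(abc)^{\pm1}$.

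\emph{Number theory.} Lift this to $A$: we have $\Phi(a)=g\,a^{i}h^{k}g^{-1}$, and $a^p=h^{-q}$ gives $h^{\mp q}=a^{ip}h^{pk} = h^{-ri+pk}$. Hence $q\equiv \pm r i\pmod p$, and similarly $-q \equiv \pm r j$ for $b$ (with the roles of $r$ and $-r$ exchanged if $a$ and $b$ are swapped). The constraint that $abc$ maps to a conjugate of $(abc)^{\pm1}$ or $c^{\pm1}$ will force $i=j=1$, up to the allowed symmetries. To see this, I would pass to abelianization modulo $\langle h\rangle$ and consider the classes of $a,b,c$ in $C_p\times C_p\times\mathbb Z$, or use Euler-characteristic and exponent-sum arguments. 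This gives $q\equiv\pm r$.

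\emph{Main obstacle.} I expect the hardest part to be this last rigidity step: controlling automorphisms of $B_p$ that send $c$ and $abc$ to conjugates of each other's powers. The risk is that $i\neq 1$ could survive, and I would attack this using the exponent map $B_p\to C_p\times C_p\times\mathbb Z$.
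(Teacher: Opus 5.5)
Your outline is the paper's: recover $A_{p,q}$ as the centralizer of $h$, recover $\langle c\rangle$ and $\langle abc\rangle$ up to conjugacy and swapping, then compare classes in $B_p^{\ab}=\Z/p\Z\oplus\Z/p\Z\oplus\Z$ and use $a^p=h^{-q}$. Your last step is exactly the paper's computation. Once its inputs are available it is not an obstacle: the $\Z$-coordinate forces the two signs to agree, and the torsion coordinates give $i\equiv j\equiv\pm1$ at once. The gap is that both rigidity inputs are asserted rather than proved, and the criterion you offer for the first one is false. A centralizer that is non-abelian and not virtually cyclic does not single out conjugates of powers of $h$, since $C_G(c^m)=\langle h,c\rangle\ast_{\langle c\rangle}\langle shs^{-1},c\rangle\cong F_2\times\Z$. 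Your other criterion (``central in a vertex group but in no edge group'') refers to the splitting, which an abstract isomorphism is not yet known to respect. Three facts are missing. First, hyperbolic elements have abelian centralizers: the kernel of the action on the axis lies in an edge group, and it is central because $\eta$ is non-zero on edge groups, so conjugation cannot invert it. Second, an element fixing an edge is conjugate to some $c^m$, so its centralizer is $F_2\times\Z$. Third, $F_2\times\Z$ modulo its centre is $F_2\not\cong C_p\ast C_p\ast\Z\cong A/\ZZ(A)$. Hence $\Phi(h)$ fixes exactly one vertex. Since elements of $A\setminus\langle h\rangle$ have abelian centralizers in $A$, this gives $\Phi(A_{p,q})=gA_{p,r}g^{-1}$.

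Similarly, $\Phi(A_{p,q})=A_{p,r}$ gives an equivariant bijection on vertices, but adjacency is not visibly preserved. For example, $A\cap gAg^{-1}=\langle c\rangle$ also for $g=shs^{-1}$, although $gA$ is at distance $2$ from $A$. So ``$\Phi$ maps the Bass--Serre tree equivariantly'' needs an argument. What the number theory actually needs is that $\Phi(c_q)$ is an $A_{p,r}$-conjugate of $c_r^{\pm1}$ or $(a_rb_rc_r)^{\pm1}$ with exponent exactly $\pm1$. If you only know $\Phi(c_q)=\alpha y^k\alpha^{-1}$, the abelianization gives $i\equiv j\equiv\pm k$ and hence only $q\equiv\pm kr$. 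The paper first shows $\Phi(c_q)$ fixes an edge; otherwise its centralizer would lie in $A_{p,r}$ and be abelian or all of $A_{p,r}$. It then gets $k=\pm1$ by comparing $\ZZ(C_G(\Phi(c_q)))=\langle\Phi(c_q)\rangle$ (transported from $C_G(c_q)\cong F_2\times\Z$) with $\ZZ(C_G(\alpha y^k\alpha^{-1}))=\alpha\langle y\rangle\alpha^{-1}$. Alternatively, $\Phi(\langle c_q\rangle)=A_{p,r}\cap\Phi(s_q)A_{p,r}\Phi(s_q)^{-1}$ lies in the stabilizer of the first edge of the geodesic from $A_{p,r}$ to $\Phi(s_q)A_{p,r}$; doing the same for $\Phi^{-1}$ and using malnormality of $\langle c\rangle$ in $B_p$ gives equality.

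Finally, your ``cleaner'' map for $r=-q$ is wrong. The substitution $h\mapsto h^{-1}$ and the swap $a\mapsto b$, $b\mapsto b^{-1}ab$ each negate $q$. Their composite therefore sends $a^ph^q$ to $b^ph^{-q}=h^{r-q}$, so it is an automorphism of $G_{p,q}$. Either ingredient alone, e.g.\ $h\mapsto h^{-1}$ with everything else fixed, gives $G_{p,q}\cong G_{p,-q}$. Also, the proposition allows $\gcd(p,q)>1$, and then $\langle a,h\rangle\cong\Z\oplus\Z/\gcd(p,q)\Z$ is not cyclic; fortunately nothing in your argument depends on that claim.
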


To prove this proposition, we will require much information about the centralizers in $G_{p,q}$, and in particular we will make significant use of the decomposition of $G_{p,q}$ as an HNN extension. 

We recall some notation. Throughout this section, unless otherwise specified, we fix $p \geq 2$ and $q \in \Z$ and let $G = G_{p,q}$. We retain the names of the generators $a, b, c, h, s$ from the presentation \eqref{Eq:Gpq-definition} of $G$. Similarly, $G$ is an HNN extension of a group $A := \langle a, b, c, h \rangle$ identifying $\langle c \rangle$ with $\langle abc \rangle$ with the stable letter $s$. The center of $A$ is $\langle h \rangle$, and $B := A / \langle h \rangle$ is isomorphic to $C_p \ast C_p \ast \Z$. We will sometimes use the names $a, b, c$ also for their images in $B$. We denote the centralizer of an element $x$ in a group $G$ by $C_G(x)$ throughout, and likewise the normalizer by $N_G(x)$. 

We record some easy facts about centralizers in $B$. Since $B = \langle a, b, c \rangle \cong C_p \ast C_p \ast \Z$, the centralizer of every non-trivial element of $B$ is cyclic. Furthermore, the two subgroups $\langle c \rangle$ and $\langle abc \rangle$ of $B$ are both maximal infinite cyclic and malnormal. The maximality and malnormality of the free factor $\langle c \rangle$ are both immediate, and the map $a \mapsto a, b \mapsto b, c \mapsto abc$ is an automorphism of $B$, so $\langle abc \rangle$ is also malnormal. On the other hand, $\langle c \rangle$ and $\langle abc \rangle$ are not conjugate to one another, as can be seen by projecting to the abelianization.

\subsection{Centralizers in $A$}\label{Subsec:centralizers-in-A}

We first prove a lemma about some centralizers in the base group $A$. Recall that $A$ is a central extension of the virtually free group $B$, so the following statements are all more or less routine. 

\begin{lemma}\label{Lem:Centralizers-in-A}
The following statements all hold about centralizers in $A$. 
\begin{enumerate}[label=(\roman*)]
\item If $x \in A \setminus \langle h \rangle$, then $C_A(x)$ is abelian. 
\item For every $m \neq 0$, we have 
\begin{equation}\label{Eq:centralizers-of-u^m}
C_A(c^m) = \langle h, c \rangle, \quad \text{and} \quad C_A((abc)^m) = \langle h, abc \rangle.
\end{equation}
\item We have $N_A(\langle c \rangle) = C_A(c)$ and $N_A(\langle abc \rangle) = C_A(abc)$. 
\end{enumerate}
\end{lemma}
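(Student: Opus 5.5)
Throughout, $\pi \colon A \to B$ denotes the quotient by the central subgroup $\langle h \rangle$. The whole argument consists of pushing centralizer information down to $B$ via $\pi$, using the stated facts about $B = C_p \ast C_p \ast \Z$, and then lifting back. The key observation is that $\pi(C_A(x)) \subseteq C_B(\pi(x))$ for every $x \in A$, and that $C_A(x) \supseteq \langle h \rangle$ because $h$ is central.

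For (i), let $x \in A \setminus \langle h \rangle$. Then $\pi(x) \neq 1$, so by the recorded fact $C_B(\pi(x))$ is cyclic. Hence $\pi(C_A(x))$ is cyclic, generated by some $\pi(y)$, and so $C_A(x) \subseteq \langle y, h \rangle$. Since $h$ is central, $\langle y, h\rangle$ is abelian, and therefore so is its subgroup $C_A(x)$.

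For (ii), take $m \neq 0$. The inclusion $\langle h, c \rangle \subseteq C_A(c^m)$ is clear. For the reverse, if $g \in C_A(c^m)$ then $\pi(g)$ centralizes $\pi(c)^m$, which is a non-trivial element of the infinite cyclic free factor $\langle c \rangle$ of $B$. In a free product, the centralizer of a non-trivial element of a free factor $\Z$ lies in that factor, by malnormality of free factors together with the usual normal-form argument. Moreover, $\langle c \rangle$ is maximal cyclic, so $C_B(c^m) = \langle c \rangle$. Thus $\pi(g) = \pi(c)^k$ for some $k$, and hence $g \in c^k\langle h \rangle \subseteq \langle h, c\rangle$. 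For $abc$ I apply the automorphism $a\mapsto a$, $b \mapsto b$, $c \mapsto abc$ of $B$. It also lifts to an automorphism of $A$ fixing $h$: the relations $a^ph^q = b^ph^{-q} = 1$ and centrality are preserved, and the inverse is given by $c \mapsto b^{-1}a^{-1}c$. Transporting the statement for $c$ along this automorphism gives $C_A((abc)^m) = \langle h, abc\rangle$.

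For (iii), the inclusion $C_A(c) \subseteq N_A(\langle c \rangle)$ is trivial. Conversely, suppose $g \in N_A(\langle c\rangle)$. Then $g^{-1}cg = c^{\pm1}$, since conjugation restricts to an automorphism of the infinite cyclic group $\langle c \rangle$, and $c$ has infinite order. To exclude $c^{-1}$, I project to the abelianization of $A$, where conjugation acts trivially; there $c$ maps to an element of infinite order (as noted in the paper), so $c = c^{-1}$ is impossible. Hence $g$ centralizes $c$, and the same argument, or the automorphism above, handles $abc$.

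The only step needing care is the identification $C_B(c^m) = \langle c\rangle$. I expect this to follow from the recorded maximality and malnormality of $\langle c\rangle$ in $B$: the centralizer $C_B(c^m)$ is cyclic and contains $c$, so by maximality it is exactly $\langle c \rangle$.
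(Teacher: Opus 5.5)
Your proof is correct. Parts (i) and (ii) follow the paper's argument essentially verbatim: you push down to $B = A/\langle h\rangle$, use that centralizers there are cyclic and that $\langle c\rangle$ is maximal cyclic and malnormal, and then take preimages. The only cosmetic difference is that you lift the automorphism $c \mapsto abc$ to $A$ (correctly checking the relations and the inverse $c \mapsto b^{-1}a^{-1}c$), whereas the paper applies it in $B$ and then takes inverse images.

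Part (iii) is where you genuinely diverge. The paper again projects to $B$ and uses malnormality of $\langle c\rangle$ to get $N_A(\langle c\rangle) \subseteq \langle h, c\rangle = C_A(c)$. You instead observe that a normalizing element must conjugate $c$ to $c^{\pm 1}$, and you rule out $c^{-1}$ because $c$ has infinite order in $A^{\mathrm{ab}}$.

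Your route uses nothing about the free product structure of $B$; it works for any element whose image in the abelianization has infinite order. It is also exactly the device the paper itself uses, via $\eta$, in the proof of Lemma~\ref{Lem:Centralizers-in-G}(iii) to exclude $xzx^{-1} = z^{-1}$. The paper's route, by contrast, keeps the whole lemma within a single project-to-$B$-and-lift framework. It yields the explicit description $N_A(\langle c\rangle) = \langle h, c\rangle$ in one step; you recover this anyway by combining your argument with (ii) for $m=1$.
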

\begin{proof}
First, we prove (i).  As mentioned above, the centralizer of any non-trivial element in $B$ is cyclic, so if $x \in A \setminus \langle h \rangle$, then its image in $B$ is non-trivial, and hence $C_A(x)$ is contained in the inverse image of some cyclic subgroup of $B$. That inverse image is generated by one lift together with the central element $h$, and is hence abelian.

Next, we prove (ii). Fix some $m \neq 0$. The image of $c^m$ in $B$ is again $c^m$, and as noted above the subgroup $\langle c \rangle$ is maximal cyclic and malnormal in $B$, so that $C_B(c^m) = \langle c \rangle$. Taking the inverse image in $A$ gives $C_A(c^m) = \langle h, c \rangle$, as desired. Similarly, since $c \mapsto abc$ extends to an automorphism of $B$, and taking inverse images, we get $C_A((abc)^m) = \langle h, abc \rangle$. 

Finally, we prove (iii). The inclusion(s) $\supseteq$ are obvious, so suppose $x \in N_A(\langle c \rangle)$. Then modulo $\langle h \rangle$, the element $x$ normalizes $\langle c \rangle$ in $B$, so malnormality gives that $x \in \langle c \rangle$ in $B$. Hence $x \in \langle h, c \rangle$ in $A$. Since $\langle h, c \rangle$ centralizes $c$, and since $x$ was arbitrary, we thus get $N_A(\langle c \rangle) \subseteq C_A(c)$ and we thus get the equality. The proof for $\langle abc \rangle$ is identical.
\end{proof}

\subsection{Centralizers in $G$}\label{Subsec:centralizers-in-G}

To describe centralizers in $G$, we require its description as an HNN extension of $A$, as recalled in the beginning of \S\ref{Sec:groups-are-freebycyclic}. In particular, let $\cT$ be the Bass--Serre tree of $G$, and recall that $s$ is the stable letter conjugating the two subgroups $\langle c \rangle$ and $\langle abc \rangle$. The vertices of $\cT$ are cosets of $A$. Let $x_0 \in V(\cT)$ be the vertex represented by $A$, let $x_1$ be the vertex represented by $sA$, and let $y_{0,1}$ be the standard edge connecting $x_0$ to $x_1$. Finally, we will make use of the homomorphism $\eta \colon G \to \Z$ defined by \eqref{Eq:eta}, though we shall not require any of the properties of it proved in \S\ref{Sec:groups-are-freebycyclic} (except, of course, that it is well-defined). We note that $\eta(c) = \eta(abc) = 1$.

\begin{lemma}\label{Lem:Centralizers-in-G}
The following statements all hold about centralizers in $G$.
\begin{enumerate}[label=(\roman*)]
\item For every $m \neq 0$, we have $C_G(h^m) = A$. 
\item For every $m \neq 0$, we have $C_G(c^m) = \langle h, c \rangle \ast_{\langle c \rangle} \langle shs^{-1}, c \rangle \cong F_2 \times \Z$.
\item If $g \in G$ acts hyperbolically on $\cT$, then $C_G(g)$ is abelian. 
\end{enumerate}
\end{lemma}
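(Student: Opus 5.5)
All three parts will be proved by analysing fixed-point sets in the Bass--Serre tree $\cT$. The basic fact is that if $g$ is elliptic with fixed subtree $\Fix(g)$, then $C_G(g)$ preserves $\Fix(g)$. If $g$ is hyperbolic with axis $L_g$, then $C_G(g)$ preserves $L_g$ and acts on it by translations, since it commutes with the translation $g$.

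\emph{Part (i).} The element $h^m$ fixes $x_0$, so the first task is to compute its fixed subtree. An edge at $x_0$ has the form $g\,y_{0,1}$ or $g s^{-1} y_{0,1}$ with $g \in A$, and its stabilizer is a conjugate $g\langle c\rangle g^{-1}$ or $g\langle abc\rangle g^{-1}$. Since $h$ is central in $A$, the element $h^m$ lies in such a stabilizer iff $h^m \in \langle c\rangle$ or $h^m\in\langle abc\rangle$. This fails because the images of $\langle c\rangle$ and $\langle abc\rangle$ in $B$ are infinite cyclic, so they meet $\langle h\rangle$ trivially in $A$. Hence $\Fix(h^m)=\{x_0\}$, and $C_G(h^m)\subseteq \Stab(x_0)=A$. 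The reverse inclusion holds because $h$ is central in $A$.

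\emph{Part (ii).} The element $c^m$ fixes $y_{0,1}$, so it fixes $x_0$ and $x_1=sA$. I will compute $\Fix(c^m)$ exactly.
\begin{itemize}
\item At $x_0$, the edges fixed by $c^m$ are the $g\,y_{0,1}$ with $c^m\in g\langle c\rangle g^{-1}$, and the $gs^{-1}y_{0,1}$ with $c^m\in g\langle abc\rangle g^{-1}$ ($g\in A$).
\item Pass to $B$ and use that $\langle c\rangle$ and $\langle abc\rangle$ are malnormal and not conjugate, noting that conjugates of $\langle abc\rangle$ contain no power of $c$. It follows that only edges with $g\in C_A(c)=\langle h,c\rangle$ survive, and those all equal $y_{0,1}$.
\item At $x_1$, conjugate by $s$: $s^{-1}c^m s=(abc)^m$, and the same argument shows that the only fixed edge at $x_1$ is $y_{0,1}$.
\end{itemize}
So $\Fix(c^m)=y_{0,1}$, and $C_G(c^m)$ stabilizes this edge as a set. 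Since $G$ acts without inversions, $C_G(c^m)$ fixes both endpoints. Therefore
\[
C_G(c^m)=C_A(c^m)\ast_{\langle c\rangle} C_{sAs^{-1}}(c^m).
\]
This is the amalgam of the two vertex stabilizers over the edge stabilizer, which is an honest amalgam as a subgroup of the tree action restricted to a single edge; equivalently, it is an intersection of stabilizers. By Lemma~\ref{Lem:Centralizers-in-A}(ii), $C_A(c^m)=\langle h,c\rangle$. Also
\[
C_{sAs^{-1}}(c^m)=s\,C_A((abc)^m)\,s^{-1}=s\langle h,abc\rangle s^{-1}=\langle shs^{-1},c\rangle .
\]
Finally, $\langle h,c\rangle\ast_{\langle c\rangle}\langle shs^{-1},c\rangle$ with $c$ central in both factors is $\langle c\rangle\times(\langle h\rangle\ast\langle shs^{-1}\rangle)\cong \Z\times F_2$.

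The subtle point is the assertion that a subgroup fixing exactly one edge equals the product of the two vertex-stabilizer pieces. I would phrase it instead as follows: $C_G(c^m)$ acts on the invariant subtree $\Fix(c^m)$, which is a single edge, so $C_G(c^m)$ lies in $\Stab(x_0)\cap\Stab(x_1)=\langle c\rangle$. That cannot be right, since $h$ centralizes $c$ but does not fix $x_1$. So the real fixed tree is larger, and I must redo the computation. The edges at $x_0$ fixed by $c^m$ are the $g\,y_{0,1}$ with $g\in\langle h,c\rangle$, and these are the edges $h^k y_{0,1}$, which are distinct. Hence $\Fix(c^m)$ is infinite, and the computation of $\Fix(c^m)$ is the main obstacle. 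The corrected plan is to show that $\Fix(c^m)$ is the subtree $T_c$ spanned by $\langle h,c,shs^{-1}\rangle\cdot y_{0,1}$. The steps are:
\begin{enumerate}
\item At each vertex, the fixed edges form a single coset orbit, as in the local computation above.
\item $H=\langle h, shs^{-1}, c\rangle$ acts on $T_c$ with quotient a single edge: vertex stabilizers are conjugates of $\langle h,c\rangle$ and $\langle shs^{-1},c\rangle$, and the edge stabilizer is $\langle c\rangle$.
\item Bass--Serre theory then gives $H\cong \langle h,c\rangle\ast_{\langle c\rangle}\langle shs^{-1},c\rangle$.
\item Any centralizing element preserves $T_c$, so by the local coset description it can be multiplied by an element of $H$ so that it fixes $y_{0,1}$. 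It then lies in $\langle c\rangle\subseteq H$, so $C_G(c^m)=H$.
\end{enumerate}

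\emph{Part (iii).} Let $g$ be hyperbolic with axis $L$. Then $C_G(g)$ acts on $L$ by translations, giving a homomorphism $C_G(g)\to\Z$ whose kernel $K_0$ fixes $L$ pointwise. Any element of $K_0$ fixes an edge of $L$, so $K_0$ lies in a conjugate of $\langle c\rangle$ and is cyclic. I then split into two cases.
\begin{itemize}
\item If $K_0=1$, then $C_G(g)$ is infinite cyclic.
\item If $K_0=\langle x^k\rangle$ with $x$ conjugate to $c$ and $k\neq 0$, then $\Fix(x^k)\supseteq L$. By part (ii), the fixed tree of $x^k$ is the Bass--Serre tree of $\langle h,c\rangle\ast_{\langle c\rangle}\langle shs^{-1},c\rangle$, conjugated. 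The element $g$ normalizes $K_0$, and since $g$ preserves the translation length of $x^k$, it centralizes $x^k$ up to sign. By part (ii), $C_G(g)$ therefore lies in a conjugate of $\langle c\rangle\times F_2$, contains the central $\langle c\rangle$-factor, and projects into the centralizer of a nontrivial element of $F_2$, which is cyclic. Hence $C_G(g)$ is abelian.
\end{itemize}
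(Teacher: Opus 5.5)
\textbf{Gap in part (iii).} Your part (i) is the paper's argument. In part (iii), the case $K_0\neq 1$ rests on the claim that $C_G(g)$ lies in the conjugate $C_G(x^k)$ of $\langle c\rangle\times F_2$, and this is not justified. The reason you give concerns only $g$. But $g$ commutes with $x^k$ trivially, because $x^k\in K_0\subseteq C_G(g)$. The appeal to the ``translation length of $x^k$'' does no work: $x^k$ is elliptic, and $x^{k}$ and $x^{-k}$ have the same translation length.

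What you actually need is that \emph{every} $y\in C_G(g)$ centralizes $x^k$. A priori you only know that $K_0$ is normal in $C_G(g)$, so $yx^ky^{-1}=x^{\pm k}$. The sign $-1$ is exactly the case that must be excluded. If it occurred, $C_G(g)$ would be the non-abelian group $\Z\rtimes_{-1}\Z$.

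The paper excludes it with the homomorphism $\eta$. Since $x$ is conjugate to $c$, we have $\eta(x^k)=k\neq -k=\eta(x^{-k})$. With that, $K_0$ is central in $C_G(g)$ and $C_G(g)/K_0$ embeds in $\Z$, so $C_G(g)$ is abelian at once. The detour through part (ii) is then unnecessary.

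\textbf{Part (ii): a different route with one unchecked step.} Your corrected plan for (ii) differs from the paper's route, and it is sound apart from one omission.
\begin{itemize}
\item You identify $\Fix(c^m)$ explicitly as $H\cdot y_{0,1}$. You do this by a local computation at $x_0$ and $x_1$, using malnormality in $B$, transported by elements of $H$, which commute with $c^m$.
\item The paper never computes the fixed tree. It gets edge-transitivity of $C_G(c^m)$ on $\Fix(c^m)$ in one line by applying $\eta$ to $g^{-1}c^mg=c^k$.
\item Your route yields explicit generators and the explicit fixed tree, at the cost of extra bookkeeping, for instance connectivity of $H\cdot y_{0,1}$ via the generators $h$, $c$, $shs^{-1}$.
\end{itemize}

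The omission is that you never check that $x_0$ and $x_1$ lie in distinct $H$-orbits. Since $x_1=sx_0$ is in the $G$-orbit of $x_0$, this is not automatic. If it failed, the quotient would be a loop and $H$ would be an HNN extension, not the amalgam you claim.

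It does follow from your own local computation. Every edge of $\Fix(c^m)$ at $x_0$ has $x_0$ as its origin, while every such edge at $x_1$ has $x_1$ as its terminus, and $G$ preserves edge orientations. Equivalently, as in the paper, $c^m$ is not $A$-conjugate to $(abc)^m$. This argument still has to be written down.

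You should also delete the abandoned first attempt, which claimed $\Fix(c^m)=y_{0,1}$.
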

\begin{proof}
We begin with (i). Fix an $m \neq 0$. The element $h^m$ fixes the vertex $x_0$, since $h^m \in A$. We claim that $h^m$ fixes no edge incident at $x_0$. The stabilizers of the edges incident at $x_0$ are precisely the $A$-conjugates of $\langle c \rangle$ and $\langle abc \rangle$. If $h^m$ fixed such an edge, then for some $\alpha \in A$ and non-zero $k \in \Z$ we would have either $h^m = \alpha c^k \alpha^{-1}$ or else $h^{m} = \alpha (abc)^k \alpha^{-1}$. Passing to $B = A / \langle h \rangle$, this would give $c^k=1$ or $(abc)^k=1$ in $B$, both of which are impossible, since $c$ and $abc$ both have infinite order. Since the fixed-point set of $h^m$ acting on $\cT$ is a subtree, it follows that $h^m$ fixes exactly $\{ x_0 \}$, and every element commuting with $h^m$ must hence also fix $x_0$. Hence $C_G(h^m) \leq \Stab_G(x_0) = A$. Since $h$ is central in $A$, we get the reverse inclusion.

\

Next, we prove (ii). Again, fix some $m \neq 0$. Let $X = \Fix_\cT(c^m)$, i.e.\ $\cX$ is the fixed subtree of the action of $c^m$ on $\cT$. Recall that $y_{0,1}$ is the edge from $x_0$ to $x_1$ in $\cT$. Since $c^m \in \Stab_G(y_{0,1}) = \langle c \rangle$, the tree $\cX$ is non-empty and contains $y_{0,1}$. The group $C_G(c^m)$ also preserves $\cX$, since $c^m(zx) = z(c^mx) = zx$ for every $z \in C_G(c^m)$ and $x \in \cX$. We show that this action of $C_G(c^m)$ on $\cX$ is transitive on the edges of $\cX$. Let $y$ be any edge of $\cX$. Now $G$ acts transitively on the edges of $\cT$, since $\cT$ is the Bass--Serre tree of the HNN extension, so we may write $y = g \cdot y_{0,1}$ for some $g \in G$. Because $c^m$ fixes $y$, we have $c^m \in \Stab_G(y) = g \langle c \rangle g^{-1}$. Hence $g^{-1} c^m g = c^k$ for some $k \in \Z$. Applying the map $\eta \colon G \to \Z$ to both sides, we get $m=k$, and hence $g^{-1}c^m g = c^m$. Thus $g \in C_G(c^m)$, so $y = g \cdot y_{0,1}$ lies in the $C_G(c^m)$-orbit of $y_{0,1}$. Since the edge $y$ was arbitrary, this shows the claimed transitivity. 

We will now show that the endpoints $x_0$ and $x_1$ of $y_{0,1}$ lie in distinct $C_G(c^m)$-orbits. Suppose for contradiction that for some $z \in C_G(c^m)$ we have $z \cdot x_0 = x_1$. Since $x_0 = A$ and $x_1 = sA$, we have $z = sa_0$ for some $a_0 \in A$. Since $z$ centralizes $c^m$, we get $c^m = z^{-1}c^m z = a_0^{-1}s^{-1}c^m sa_0 = a_0^{-1} (abc)^m a_0$. Passing into the quotient $B$, this would imply that $c^m$ is conjugate to $(abc)^m$ in $B$. But this is clearly not true, as then their centralizers $\langle c \rangle$ resp.\ $\langle abc \rangle$ would be conjugate, which is false as observed earlier by projecting to the abelianization. 

Thus the quotient graph $C_G(c^m) \setminus \cX$ is a segment, with endpoints $x_0$ and $x_1$, and one edge $y_{0,1}$. The stabilizer in $C_G(c^m)$ of $x_0$ is $C_G(c^m) \cap A = C_A(c^m) = \langle h, c \rangle$ by Lemma~\ref{Lem:Centralizers-in-A}(ii), while the stabilizer in $C_G(c^m)$ of $x_1$ is 
\[
C_G(c^m) \cap sAs^{-1} = sC_A(s^{-1}c^m s)s^{-1} = sC_A((abc)^m)s^{-1} =  s \langle h, abc \rangle s^{-1} = \langle shs^{-1}, c \rangle
\]
again by Lemma~\ref{Lem:Centralizers-in-A}(ii). The stabilizer of the edge $y_{0,1}$ is just $C_G(c^m) \cap \langle c \rangle = \langle c \rangle$. Thus, since $C_G(c^m)$ is, by the fundamental theorem of Bass--Serre theory, isomorphic to the fundamental group of the segment just constructed, we have the amalgam decomposition
\begin{equation}\label{Eq:amalgam-for-cgu}
C_G(c^m) = \langle h, c \rangle \ast_{\langle c \rangle} \langle shs^{-1}, c \rangle \cong \pres{}{c, \alpha, \beta}{[c, \alpha] = [c, \beta] = 1}.
\end{equation}
This is obviously isomorphic to $F_2 \times \Z$, i.e.\ $h$ and $shs^{-1}$ generate a free group, so we have (ii).

\

Finally, we prove (iii). Let $g \in G$ be hyperbolic, and let $\cL$ be its axis. Every element of $C_G(g)$ preserves $\cL$, so we obtain a homomorphism $C_G(g) \to \Z$. The kernel of this map fixes $\cL$ pointwise and is therefore contained in an edge stabilizer, and is hence cyclic. If the kernel is trivial, then $C_G(g)$ is cyclic and we are done. Otherwise, let $z$ be a non-trivial element in the necessarily infinite cyclic kernel. For every $x \in C_G(g)$, we have that $xzx^{-1}$ is in the kernel, and since the kernel is infinite cyclic, $xzx^{-1} = z^{\pm 1}$. Since $z$ lies in the conjugate of an edge stabilizer, there are $y \in G$ and a non-zero $k \in \Z$ such that $z = y c^k y^{-1}$. But then $\eta(z) = \eta(c^k) = k \neq 0$, so we conclude $xzx^{-1} = z$. Thus the kernel is central in $C_G(g)$, so $C_G(g)$ is a central extension of a cyclic group and hence abelian. 
\end{proof}

This completes the results about the centralizers in $G$ that we will require. 

\subsection{Reconstructing the base group from an isomorphism}\label{Subsec:reconstructing-A}

We will now show that any isomorphism $G_{p,q} \xrightarrow{\sim} G_{p,r}$ must, up to an inner automorphism, descend to an isomorphism $A_{p,q} \xrightarrow{\sim} A_{p,r}$. Once this is done, we will show that any \textit{such} isomorphism in turn forces enough control over the images of $c$ and $abc$ to show that $q \equiv \pm r \pmod{p}$. 

For ease of reading, we will throughout this section use the notation $C_{p,q}$ resp.\ $C_{p,r}$ for centralizers in $G_{p,q}$ resp.\ $G_{p,r}$, rather than use the unwieldy $C_{G_{p,q}}$ and $C_{G_{p,r}}$. We will also re-use the notation of subscripting the generators of $G_{p,q}$ resp.\ $G_{p,r}$ by $q$ resp.\ $r$, so that e.g.\ $G_{p,q}$ is generated by $a_q, b_q, c_q, h_q, s_q$. 

\begin{lemma}\label{Lem:iso-descends-to-A}
Let $p \geq 2$ and $q, r \in \Z$ be such that there exists an isomorphism $\Phi \colon G_{p,q} \xrightarrow{\sim} G_{p,r}$. Then there exists $g \in G_{p,r}$ such that $\Phi(A_{p,q}) = g A_{p,r} g^{-1}$. 
\end{lemma}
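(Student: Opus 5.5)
The plan is to characterise the conjugates of the base group intrinsically, as centralizers of elements whose centralizer is non-abelian and which are ``central-like''. Concretely, consider the element $h_q \in G_{p,q}$. By Lemma~\ref{Lem:Centralizers-in-G}(i), $C_{p,q}(h_q) = A_{p,q}$. Hence $\Phi(A_{p,q}) = C_{p,r}(\Phi(h_q))$. It therefore suffices to show that $x := \Phi(h_q)$ is conjugate in $G_{p,r}$ into $A_{p,r}$, and in fact that its centralizer is a conjugate of $A_{p,r}$.

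First I rule out that $x$ acts hyperbolically on the Bass--Serre tree $\cT_r$ of $G_{p,r}$. The group $C_{p,r}(x) \cong A_{p,q}$ is non-abelian, since it surjects onto $B_p = C_p \ast C_p \ast \Z$. By Lemma~\ref{Lem:Centralizers-in-G}(iii), $x$ is therefore elliptic, so after replacing $\Phi$ by its composition with an inner automorphism we may assume $x$ fixes $x_0$, i.e.\ $x \in A_{p,r}$.

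Next I show that $x \in \langle h_r \rangle$. If not, Lemma~\ref{Lem:Centralizers-in-A}(i) gives that $C_{A_{p,r}}(x)$ is abelian. I then analyse the fixed tree $\Fix(x)$, splitting into two cases.

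\begin{enumerate}[label=(\alph*)]
\item If $x$ fixes no edge, then $C_{p,r}(x)$ fixes $x_0$, so $C_{p,r}(x) = C_{A_{p,r}}(x)$ is abelian, which is a contradiction.
\item If $x$ fixes an edge, then $x$ is conjugate into $\langle c_r \rangle$, say $x = g c_r^m g^{-1}$ with $m \neq 0$ (note $x \neq 1$). Then Lemma~\ref{Lem:Centralizers-in-G}(ii) gives $C_{p,r}(x) \cong F_2 \times \Z$. But $A_{p,q}$ is not isomorphic to $F_2 \times \Z$: its quotient by its center $\langle h_q\rangle$ is $C_p \ast C_p \ast \Z$, whereas $F_2\times\Z$ modulo its center is $F_2$, which is torsion-free. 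To make this comparison honest I need that the center of $A_{p,q}$ is exactly $\langle h_q \rangle$, which follows from $B_p$ being centerless, and that the center of $F_2 \times \Z$ is the $\Z$ factor.
\end{enumerate}

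Hence $x = h_r^m$ for some $m \neq 0$, and Lemma~\ref{Lem:Centralizers-in-G}(i) gives $C_{p,r}(x) = A_{p,r}$. Undoing the inner automorphism yields $\Phi(A_{p,q}) = gA_{p,r}g^{-1}$.

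The main obstacle is case (b), the edge-fixing case, and specifically making the torsion-in-quotient-by-center argument airtight, including the degenerate situation $m = 0$ (which is excluded because $x \neq 1$). An alternative for (b) is to note that $A_{p,q}$ contains torsion-modulo-center elements such as $a_q$ with $a_q^p \in Z$, whereas in $F_2 \times \Z$ any $y$ with $y^p$ central is itself central. Either version suffices.
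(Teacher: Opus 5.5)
Your proposal is correct and follows essentially the same route as the paper. Both arguments use $\Phi(A_{p,q}) = C_{p,r}(\Phi(h_q))$, exclude hyperbolicity via Lemma~\ref{Lem:Centralizers-in-G}(iii), exclude edge-fixing by comparing quotients by the center ($F_2$ versus $C_p \ast C_p \ast \Z$), and then use Lemma~\ref{Lem:Centralizers-in-A}(i) together with Lemma~\ref{Lem:Centralizers-in-G}(i) to land in $\langle h_r \rangle$ and conclude. The only differences are cosmetic: you order the case split differently (the paper first rules out edge-fixing outright and then deduces the unique fixed vertex), and you explicitly justify $\ZZ(A_{p,q}) = \langle h_q \rangle$, which the paper takes for granted.
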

\begin{proof}
By Lemma~\ref{Lem:Centralizers-in-G}(i), we have $A_{p,q} = C_{p,q}(h_q)$, and hence $\Phi(C_{p,q}(h_q)) = C_{p,r}(\Phi(h_q))$. Let $z = \Phi(h_q)$. The group $\Phi(A_{p,q})$ is isomorphic to $A_{p,q}$, so it is non-abelian; furthermore, the quotient modulo its center is isomorphic to $C_p \ast C_p \ast \Z$, which we shall presently use. Now the element $z \in G_{p,r}$ is either hyperbolic or elliptic. It cannot be hyperbolic, since by Lemma~\ref{Lem:Centralizers-in-G}(iii), any hyperbolic element has abelian centralizer. Hence $z$ is elliptic, so it fixes some vertex. We claim $z$ cannot fix an edge: if it did, then it would lie in a conjugate of an edge stabilizer, and hence would be conjugate to $c_r^m$ or $(a_rb_rc_r)^m$ for some $m \neq 0$. By Lemma~\ref{Lem:Centralizers-in-G}(ii), the centralizer of $c_r^m$ (and hence also of $(a_rb_rc_r)^m$, since it is conjugate to $c_r^m$) is isomorphic to $F_2 \times \Z$, which would thus also be true of the centralizer of $z$. But $F_2 \times \Z$ modulo its center is $F_2$, which is not isomorphic to $C_p \ast C_p \ast \Z$, a contradiction. Hence $z$ fixes no edge, and hence it must fix exactly one vertex of the Bass--Serre tree of $G_{p,r}$. Its centralizer also fixes that vertex, so for some $g \in G_{p,r}$ we have $\Phi(A_{p,q}) = C_{p,r}(z) \leq g A_{p,r} g^{-1}$. 

Let now $y = g^{-1}zg$. Then $y \in A_{p,r}$, and $C_{p,r}(y) = g^{-1} C_{p,r}(z) g$, so $C_{p,r}(y)$ is non-abelian. Since $y$ fixes no edge, it only fixes $x_0$, and hence any element centralizing it must also only fix it; thus $C_{p,r}(y) \leq A_{p,r}$, so $C_{p,r}(y) = C_{A_{p,r}}(y)$. By Lemma~\ref{Lem:Centralizers-in-A}(i), the only elements with non-abelian centralizers in $A_{p,r}$ lie in $\langle h_r \rangle$. Hence $y \in \langle h_r \rangle$. Since $y \neq 1$ (since $z = \Phi(h_q) \neq 1$), it follows from Lemma~\ref{Lem:Centralizers-in-G}(i) that $C_{p,r}(y) = A_{p,r}$. But $C_{p,r}(y) = g^{-1} C_{p,r}(z) g$, so conjugating by $g$ we get $\Phi(A_{p,q}) = C_{p,r}(z) = g A_{p,r} g^{-1}$. The lemma is proved.  
\end{proof}

Consequently, if $G_{p,q} \cong G_{p,r}$, we may compose any such isomorphism by some inner automorphism to conclude that there exists an isomorphism $\Phi \colon G_{p,q} \to G_{p,r}$ with $\Phi(A_{p,q}) = A_{p,r}$, i.e.\ the groups $G_{p,q}$ are rigid enough to have any isomorphism within this class also descend to the base groups $A_{p,q}$. 

\subsection{Reconstructing the associated subgroups from an isomorphism}\label{Subsec:reconstructing-u-v}

We now turn to the second reconstruction result: the above Lemma~\ref{Lem:iso-descends-to-A} showed that any isomorphism $G_{p,q} \cong G_{p,r}$ forces the existence of an isomorphism restricting to an isomorphism $A_{p,q} \cong A_{p,r}$. We now show that any \textit{such} isomorphism places a great deal of control over the images of the elements $c_q$ and $a_q b_q c_q$, the generators of the associated subgroups. 

\begin{lemma}\label{Lem:reconstructing-u-and-v}
Let $\Phi \colon G_{p,q} \to G_{p,r}$ be an isomorphism such that $\Phi(A_{p,q}) = A_{p,r}$. Then there exist $\alpha, \beta \in A_{p,r}$ and $\delta = \pm 1$ such that either
\begin{alignat*}{2}
\Phi(c_q) &= \alpha c_r^\delta \alpha^{-1} \quad &&\text{and} \quad \Phi(a_qb_qc_q) = \beta (a_rb_rc_r)^\delta \beta^{-1}; \\ 
\text{or else} \qquad \Phi(c_q) &= \alpha (a_rb_rc_r)^\delta \alpha^{-1} \quad &&\text{and} \quad \Phi(a_q b_q c_q) = \beta c_r^\delta \beta^{-1}.
\end{alignat*}
\end{lemma}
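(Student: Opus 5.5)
The plan is to characterise, intrinsically in terms of the pair $(G,A)$, the $A$-conjugacy classes of the two associated subgroups, and then transport this characterisation along $\Phi$.

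\textbf{Step 1: an intrinsic description of edge elements.} For $G=G_{p,q}$ with base group $A$, consider elements $x \in A\setminus\{1\}$ with $C_G(x)\not\leq A$. I claim these are exactly the elements of $A$ conjugate in $A$ to $c^k$ or $(abc)^k$ for some $k\neq 0$.
\begin{itemize}
\item If $x$ fixes no edge of $\cT$, then, as in the proof of Lemma~\ref{Lem:Centralizers-in-G}(i), $x$ fixes only $x_0$. Its centralizer must then fix $x_0$, so $C_G(x) \leq A$.
\item Conversely, by Lemma~\ref{Lem:Centralizers-in-G}(ii), $C_G(c^k)$ contains $shs^{-1}\notin A$. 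Similarly $C_G((abc)^k)=s^{-1}C_G(c^k)s$ contains $h\cdot$-conjugates not in $A$; concretely $s^{-1}c^k s=(abc)^k$, and $s^{-1}(shs^{-1})s=h$ shows $C_G((abc)^k)\ni s^{-1}hs\notin A$.
\item Edges at $x_0$ have stabilisers that are $A$-conjugates of $\langle c\rangle$ or $\langle abc\rangle$. So $x$ fixes an edge at $x_0$ iff it is $A$-conjugate to such a power.
\end{itemize}
Let $\mathcal{S}(G,A)$ be the set of $A$-conjugacy classes of cyclic subgroups $\langle x\rangle$ generated by such elements that are \emph{maximal} among such cyclic subgroups. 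By maximality of $\langle c\rangle,\langle abc\rangle$ in $B$ (hence of $\langle c\rangle$ in $\langle h,c\rangle\cap$ edge-elements), $\mathcal{S}(G,A)=\{[\langle c\rangle],[\langle abc\rangle]\}$. These two classes are distinct, since their images in $B$ are non-conjugate (abelianization).

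\textbf{Step 2: transport along $\Phi$.} Since $\Phi(A_{p,q})=A_{p,r}$ and $\Phi(C_{p,q}(x))=C_{p,r}(\Phi(x))$, the condition ``$x\in A$, $C_G(x)\not\leq A$'' is preserved. Maximality and $A$-conjugacy are preserved too. Hence $\Phi$ induces a bijection $\mathcal{S}(G_{p,q},A_{p,q})\to\mathcal{S}(G_{p,r},A_{p,r})$ of two-element sets, and this bijection either preserves or swaps the types.

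In the first case $\Phi(\langle c_q\rangle)=\alpha\langle c_r\rangle\alpha^{-1}$, so $\Phi(c_q)=\alpha c_r^{\delta}\alpha^{-1}$ with $\delta=\pm1$, because $\Phi$ maps a generator to a generator. Likewise $\Phi(a_qb_qc_q)=\beta(a_rb_rc_r)^{\varepsilon}\beta^{-1}$ with $\varepsilon = \pm 1$. The swapped case is analogous.

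\textbf{Step 3: matching the signs.} It remains to show $\delta=\varepsilon$, which I expect to be the only genuinely delicate point.
\begin{itemize}
\item Apply $\Phi$ to the relation $s_q^{-1}c_qs_q=a_qb_qc_q$. This shows that $c_r^{\delta}$ and $(a_rb_rc_r)^{\varepsilon}$ are conjugate in $G_{p,r}$ (or the analogue in the swapped case).
\item The relation $s_r^{-1}c_rs_r=a_rb_rc_r$ then shows that $c_r^\delta$ is conjugate to $c_r^\varepsilon$ in $G_{p,r}$.
\item Applying the homomorphism $\eta\colon G_{p,r}\to\Z$ of \eqref{Eq:eta}, which satisfies $\eta(c_r)=1$ and is conjugation-invariant, gives $\delta=\varepsilon$.
\end{itemize}

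The main point needing care is Step 1's maximality claim, namely that no edge element of $A$ generates a cyclic subgroup properly containing a conjugate of $\langle c\rangle$. This follows by passing to $B$, using that $\langle c\rangle$ and $\langle abc\rangle$ are maximal cyclic there, and that $h^j c^k$ with $j\neq0$ is not an edge element (its image in $B$ would be $c^k$, but it is not conjugate to a pure power of $c$, by Lemma~\ref{Lem:Centralizers-in-A}(ii) together with $\eta$).
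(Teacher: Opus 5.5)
Your proof is correct, but it differs from the paper's in how edge-fixing is detected and how the exponent is pinned down.

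The paper treats each $w=\Phi(x)$ separately, as follows.
\begin{enumerate}[label=(\arabic*)]
\item It shows that $w$ fixes an edge using the isomorphism type of its centralizer. The condition $C_{p,r}(w)\cong F_2\times\Z$ rules out both the abelian centralizers of Lemma~\ref{Lem:Centralizers-in-A}(i) and the case $C_{p,r}(w)=A_{p,r}$ of Lemma~\ref{Lem:Centralizers-in-G}(i).
\item It forces $k=\pm1$ by comparing centres, $\ZZ(C_{p,r}(w))=\langle w\rangle$ against $\ZZ(C_{p,r}(y^k))=\langle y\rangle$.
\item It excludes $y=y'$ by a separate argument: otherwise $c_q$ and $a_qb_qc_q$ would be conjugate in $A_{p,q}$.
\end{enumerate}

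You replace the first step with the criterion ``$x\in A$ and $C_G(x)\not\leq A$''. This is transported by $\Phi$ exactly because $\Phi(A_{p,q})=A_{p,r}$. You replace the second step with maximality of cyclic subgroups, which comes from $c$ and $abc$ not being proper powers in $B$ (visible in the $\Z$-coordinate of $B^{\ab}$). The third step then falls out of the bijection between two-element sets of $A$-conjugacy classes.

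Your route needs less of the centralizer machinery. You only need that $C_G(c^k)$ contains some element outside $A$, such as $shs^{-1}$, which is immediate from the HNN relation and Britton's lemma. The paper instead reuses the computation $C_G(c^m)\cong F_2\times\Z$, which it needs anyway for Lemma~\ref{Lem:iso-descends-to-A}. The sign-matching via $\eta$ is identical in both.

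Three small points of presentation:
\begin{enumerate}[label=(\alph*)]
\item Your justification that $s^{-1}hs\in C_G((abc)^k)$ is garbled. The relevant fact is that $s^{-1}hs$ is the $s^{-1}$-conjugate of $h\in C_G(c^k)$, not the identity $s^{-1}(shs^{-1})s=h$.
\item The closing remark about $h^jc^k$ is unnecessary. If $\alpha\langle c\rangle\alpha^{-1}\leq\langle y\rangle$, then $y^m=\alpha c\alpha^{-1}$ already holds in $A$, and passing to $B$ gives $m=\pm1$.
\item You should state explicitly that $\Fix_{\cT}(x)$ is a subtree containing $x_0$. Hence an element of $A$ fixing any edge fixes one incident to $x_0$, which your third bullet in Step 1 tacitly uses.
\end{enumerate}
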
 
\begin{proof}
Let $x \in \{ c_q, a_q b_q c_q \}$ and set $w = \Phi(x)$. Since $x \in A_{p,q}$, we have $w \in A_{p,r}$ by assumption. Since $\Phi$ is an isomorphism, it follows from Lemma~\ref{Lem:Centralizers-in-G}(ii) that $C_{p,r}(w) \cong F_2 \times \Z$, and in particular $\ZZ(C_{p,r}(w)) = \langle w \rangle$. Using this, we will prove that $w$ fixes an edge in its action on the Bass--Serre tree $\cT_r$ of $G_{p,r}$. Suppose it does not, in which case it fixes a unique vertex, which must be $x_0$ (whose stabilizer is $A_{p,r}$). Hence every element commuting with $w$ fixes $x_0$, so $C_{p,r}(w) \leq A_{p,r}$. If $w \not\in \ZZ(A_{p,r}) = \langle h_r \rangle$, then by Lemma~\ref{Lem:Centralizers-in-A}(i) we have that $C_{p,r}(w)$ is abelian; a contradiction, since it is isomorphic to $F_2 \times \Z$. On the other hand, if $w \in \ZZ(A_{p,r}) = \langle h_r \rangle$, then by Lemma~\ref{Lem:Centralizers-in-G}(i) we have $C_{p,r}(w) = A_{p,r}$. But $A_{p,r} \not\cong F_2 \times \Z$, since $A_{p,r}$ mod its center is $B_p \not\cong F_2$. Thus, $w$ must fix an edge. 

Consider the subtree $\Fix_{\cT_r}(w)$. This is connected and contains $x_0$, so $w$ fixes some edge incident to $x_0$. The stabilizers of these edges are $\alpha\langle c_r \rangle \alpha^{-1}$ and $\alpha \langle a_r b_r c_r \rangle \alpha^{-1}$ where $\alpha \in A_{p,r}$. Hence $w = \alpha y^k \alpha^{-1}$ for some $\alpha \in A_{p,r}$, some $y \in \{ c_r, a_rb_rc_r \}$ and some non-zero $k \in \Z$. By Lemma~\ref{Lem:Centralizers-in-G}(ii), the center of the centralizer of $y^k$ in $G_{p,r}$ is generated by $y$ (the $\Z$-factor in $F_2 \times \Z$). Hence $\ZZ (C_{p,r}(w)) = \alpha \langle y \rangle \alpha^{-1}$. But as concluded above, we also have $\ZZ(C_{p,r}(w)) = \langle w \rangle = \alpha \langle y^k \rangle \alpha^{-1}$. Hence $\langle y \rangle = \langle y^k \rangle$, forcing $k= \pm 1$, since $y$ has infinite order. 

We are almost done; applying the above argument to both cases of $x \in \{ c_q, a_qb_qc_q\}$, we have
\begin{equation}\label{Eq:phi-map-ambiguous}
\Phi(c_q) = \alpha y^\delta \alpha^{-1}, \quad \text{and} \quad \Phi(a_q b_q c_q) = \beta (y')^{\varepsilon} \beta^{-1}
\end{equation}
for some $\alpha, \beta \in A_{p,r}$ and $y, y' \in \{ c_r, a_rb_rc_r \}$, and $\delta, \varepsilon \in \{ -1, 1 \}$. We now do a final clean-up to reduce this ambiguity and obtain the statement of the lemma. The elements $c_q$ and $a_q b_q c_q$ are conjugate in $G_{p,q}$, so their images are conjugate in $G_{p,r}$, and hence the images get mapped to the same element under $\eta \colon G_{p,r} \to \Z$. Recall that $\eta(c_r) = \eta(a_rb_rc_r) = 1$, so $\delta = \eta(\Phi(c_q)) = \eta(\Phi(a_qb_qc_q)) = \varepsilon$. Finally, $y$ and $y'$ cannot be equal, as otherwise $\Phi(c_q)$ and $\Phi(a_qb_qc_q)$ would be conjugate inside $A_{p,r}$, so $c_q$ and $a_qb_qc_q$ would be conjugate inside $A_{p,q}$. But then modulo the center $\langle h_q \rangle$, they would remain conjugate inside $B_p$, which is impossible, since we observed that $\langle c_q \rangle$ and $\langle a_q b_q c_q \rangle$ are not conjugate inside $B_p$ at the very beginning of \S\ref{Subsec:rarely-isomorphic}. Hence $\{ y, y' \} = \{ c_r, a_rb_rc_r \}$, and since $\varepsilon=\delta$, we have that \eqref{Eq:phi-map-ambiguous} is precisely the statement of the lemma. 
\end{proof}

We now have all of the ingredients required to prove the main proposition of this section: any isomorphism $G_{p,q} \cong G_{p,r}$ forces an isomorphism that maps the base group $A_{p,q}$ to $A_{p,r}$, and which is very rigid on the associated subgroups $\langle c_q \rangle$ and $\langle a_qb_qc_q \rangle$. 

\subsection{Proof of Proposition~\ref{Prop:rarely-isomorphic}}\label{Subsec:Proof-of-rarely-iso}

The reverse direction is very quick: if $r \equiv \pm q \pmod{p}$, then it is not difficult to show that the same map as defined in \eqref{Eq:im-of-h_q-generators} defines a homomorphism $G_{p,q} \to G_{p,r}$, and that we can choose $t = \pm 1$; the analogous map $G_{p,r} \to G_{p,q}$ can then easily be seen to be an inverse, and so $G_{p,q} \cong G_{p,r}$. Since we will not make any use of this direction of the proposition, we leave the details of verifying this to the reader.

We now prove the crucial forward direction. Suppose that $G_{p,q} \cong G_{p,r}$. Then by Lemma~\ref{Lem:iso-descends-to-A}, and the remarks following it, there exists an isomorphism $\Phi \colon G_{p,q} \to G_{p,r}$ such that $\Phi(A_{p,q}) = A_{p,r}$. Fix such an isomorphism $\Phi$. Since $\ZZ(A_{p,q}) = \langle h_q \rangle$ and $\ZZ(A_{p,r}) = \langle h_r \rangle$, it follows that $\Phi(h_q) = h_r^\varepsilon$ for some $\varepsilon = \pm 1$. Passing to the quotients by these centers, we have an automorphism $\overline{\Phi} \colon B_p \to B_p$. Since the two free factors $\langle a_r \rangle$ and $\langle b_r \rangle$ represent the two conjugacy classes of maximal finite subgroups of $B_p$, the automorphism $\overline{\Phi}$ must permute these two conjugacy classes. After interchanging $a_r$ and $b_r$ if necessary, there exist $\mu, \nu \in (\Z / p\Z)^\times$ such that in the abelianization $B_p^\ab = \Z /p\Z \oplus \Z / p\Z \oplus \Z$ (with basis ordered as $[a_q], [b_q], [c_q]$, where $[ \cdot ]$ denotes the class in the abelianization) we have that the map induced by $\overline{\Phi}$ sends $[a_q] \mapsto \mu [a_r]$ and $[b_q] \mapsto \nu [b_r]$. 

We claim that $\mu, \nu \equiv \pm 1 \pmod{p}$. There are two cases to consider, corresponding to the two cases in Lemma~\ref{Lem:reconstructing-u-and-v}. Let $\alpha, \beta \in A_{p,r}$ and $\delta = \pm 1$ be as in the statement of the lemma, and suppose that the first case holds. Letting $\sim$ denote conjugacy, the first case holding implies then that $\overline{\Phi}(c_q) \sim c_r^\delta$ and $\overline{\Phi}(a_q b_q c_q) \sim (a_r b_r c_r)^\delta$. Hence $[\overline{\Phi}(c_q)] =\delta [c_r]$, so
\[
\mu[a_r] + \nu [b_r] + \delta [c_r] = \delta [a_r] + \delta[b_r] + \delta [c_r].
\]
Comparing coefficients, in this first case we thus have $\mu \equiv \nu \equiv \delta \pmod{p}$, since $p[a_r] = p[b_r] = 0$. In particular, since $\delta = \pm 1$, we have $\mu, \nu \equiv \pm 1 \pmod{p}$.

If instead the second case of Lemma~\ref{Lem:reconstructing-u-and-v} holds, then very similarly we get that $\overline{\Phi}(c_q) \sim (a_r b_r c_r)^\delta$ and $\overline{\Phi}(a_q b_q c_q) \sim c_r^\delta$, so that $[\overline{\Phi}(c_q)] = \delta ([a_r] + [b_r] + [c_r])$. Comparing this image with $\delta[c_r]$ gives 
\[
\delta [c_r] = (\mu + \delta)[a_r] + (\nu + \delta)[b_r] + \delta[c_r]
\]
implying that in this second case we have $\mu \equiv \nu \equiv -\delta \pmod{p}$. Thus in this case we also have $\mu, \nu \equiv \pm 1 \pmod{p}$, since $\delta \in \{-1, 1\}$, completing the proof of our claim. 

We are now almost done. The element $\overline{\Phi}(a_q)$ is conjugate in $B_p$ to a power $x^\mu$, where $x$ is either $a_r$ or $b_r$. Lifting the conjugating element to $A_{p,r}$, we may thus write 
\begin{equation}\label{Eq:image-of-a_q}
\Phi(a_q) = g x^\mu g^{-1} h_r^j
\end{equation}
for some $g \in A_{p,r}$ and $j \in \Z$. We have a defining relation $x^p = h_r^{-\rho}$ in $A_{p,r}$, where $\rho=r$ if $x=a_r$ and $\rho = -r$ if $x = b_r$. Taking $p$th powers in \eqref{Eq:image-of-a_q}, and using $\Phi(h_q) = h_r^{\varepsilon}$, we thus get 
\[
h_r^{-\varepsilon q} = \Phi(a_q^p) = \Phi(a_q)^p = g x^{\mu p}g^{-1} h_r^{jp} = h_r^{-\rho \mu + jp}.
\]
Since $h_r$ has infinite order, we have $-\varepsilon q = -\rho \mu + jp$, so that $\varepsilon q \equiv \rho \mu \pmod{p}$. But $\mu \equiv \pm 1 \pmod{p}$, so the right-hand side is congruent to $\pm r$ mod $p$. Since $\varepsilon = \pm 1$, we thus have 
\[
q \equiv \pm r \pmod{p}.
\]
This is what was to be shown, and this completes the proof of Proposition~\ref{Prop:rarely-isomorphic}. \qed

\bibliographystyle{amsalpha}
\bibliography{freebycyclic-pr.bib}

@article {Andrew2025,
    AUTHOR = {Andrew, Naomi and Hillen, Paige and Lyman, Robert Alonzo and
              Pfaff, Catherine Eva},
     TITLE = {A hyperbolic free-by-cyclic group determined by its finite
              quotients},
   JOURNAL = {Glasg. Math. J.},
  FJOURNAL = {Glasgow Mathematical Journal},
    VOLUME = {67},
      YEAR = {2025},
    NUMBER = {3},
     PAGES = {500--502},
      ISSN = {0017-0895,1469-509X},
   MRCLASS = {20E26 (20E05 20E36 20F65)},
  MRNUMBER = {4941614},
MRREVIEWER = {Marco\ Trombetti},
       DOI = {10.1017/S0017089525000096},
       URL = {https://doi.org/10.1017/S0017089525000096},
}

@article {Baumslag1971,
    AUTHOR = {Baumslag, Gilbert},
     TITLE = {Finitely generated cyclic extensions of free groups are
              residually finite},
   JOURNAL = {Bull. Austral. Math. Soc.},
  FJOURNAL = {Bulletin of the Australian Mathematical Society},
    VOLUME = {5},
      YEAR = {1971},
     PAGES = {87--94},
      ISSN = {0004-9727},
   MRCLASS = {20E25},
  MRNUMBER = {311776},
MRREVIEWER = {C.\ F.\ Miller, III},
       DOI = {10.1017/S0004972700046906},
       URL = {https://doi.org/10.1017/S0004972700046906},
}

@article {BMRS,
    AUTHOR = {Bridson, M. R. and McReynolds, D. B. and Reid, A. W. and
              Spitler, R.},
     TITLE = {Absolute profinite rigidity and hyperbolic geometry},
   JOURNAL = {Ann. of Math. (2)},
  FJOURNAL = {Annals of Mathematics. Second Series},
    VOLUME = {192},
      YEAR = {2020},
    NUMBER = {3},
     PAGES = {679--719},
      ISSN = {0003-486X,1939-8980},
   MRCLASS = {57M50 (11F06 20E18 20H10)},
  MRNUMBER = {4172619},
       DOI = {10.4007/annals.2020.192.3.1},
       URL = {https://doi.org/10.4007/annals.2020.192.3.1},
}

@article {BMRS21,
    AUTHOR = {Bridson, Martin R. and McReynolds, D. B. and Reid, Alan W. and
              Spitler, Ryan},
     TITLE = {On the profinite rigidity of triangle groups},
   JOURNAL = {Bull. Lond. Math. Soc.},
  FJOURNAL = {Bulletin of the London Mathematical Society},
    VOLUME = {53},
      YEAR = {2021},
    NUMBER = {6},
     PAGES = {1849--1862},
      ISSN = {0024-6093,1469-2120},
   MRCLASS = {20H10 (11F06)},
  MRNUMBER = {4386043},
MRREVIEWER = {Alexander\ W.\ Mason},
       DOI = {10.1112/blms.12546},
       URL = {https://doi.org/10.1112/blms.12546},
}

@article {Bridson2017,
    AUTHOR = {Bridson, Martin R. and Reid, Alan W. and Wilton, Henry},
     TITLE = {Profinite rigidity and surface bundles over the circle},
   JOURNAL = {Bull. Lond. Math. Soc.},
  FJOURNAL = {Bulletin of the London Mathematical Society},
    VOLUME = {49},
      YEAR = {2017},
    NUMBER = {5},
     PAGES = {831--841},
      ISSN = {0024-6093,1469-2120},
   MRCLASS = {57M27 (20E18 20E26)},
  MRNUMBER = {3742450},
MRREVIEWER = {Steffen\ Kionke},
       DOI = {10.1112/blms.12076},
       URL = {https://doi.org/10.1112/blms.12076},
}

@incollection {Bridson2020,
    AUTHOR = {Bridson, Martin R. and Reid, Alan W.},
     TITLE = {Profinite rigidity, fibering, and the figure-eight knot},
 BOOKTITLE = {What's next?---the mathematical legacy of {W}illiam {P}.
              {T}hurston},
    SERIES = {Ann. of Math. Stud.},
    VOLUME = {205},
     PAGES = {45--64},
 PUBLISHER = {Princeton Univ. Press, Princeton, NJ},
      YEAR = {2020},
      ISBN = {978-0-691-18589-7; 978-0-691-16776-3; 978-0-691-16777-0},
   MRCLASS = {57K30 (20E18 57K10)},
  MRNUMBER = {4205635},
       DOI = {10.2307/j.ctvthhdvv.6},
       URL = {https://doi.org/10.2307/j.ctvthhdvv.6},
}

@article {Bridson2022,
    AUTHOR = {Bridson, M. R. and Reid, A. W.},
     TITLE = {Profinite rigidity, {K}leinian groups, and the cofinite {H}opf
              property},
   JOURNAL = {Michigan Math. J.},
  FJOURNAL = {Michigan Mathematical Journal},
    VOLUME = {72},
      YEAR = {2022},
     PAGES = {25--49},
      ISSN = {0026-2285,1945-2365},
   MRCLASS = {20H10 (20E18 22E40 30F40 57M50)},
  MRNUMBER = {4460248},
MRREVIEWER = {Alexander\ W.\ Mason},
       DOI = {10.1307/mmj/20217218},
       URL = {https://doi.org/10.1307/mmj/20217218},
}

@article {Bridson2025,
    AUTHOR = {Bridson, Martin R. and Piwek, Pawe{\l}},
     TITLE = {Profinite rigidity for free-by-cyclic groups with centre},
   JOURNAL = {J. Lond. Math. Soc. (2)},
  FJOURNAL = {Journal of the London Mathematical Society. Second Series},
    VOLUME = {111},
      YEAR = {2025},
    NUMBER = {6},
     PAGES = {Paper No. e70181, 28},
      ISSN = {0024-6107,1469-7750},
   MRCLASS = {20E18 (20F65)},
  MRNUMBER = {4920376},
MRREVIEWER = {J.\ W.\ MacQuarrie},
       DOI = {10.1112/jlms.70181},
       URL = {https://doi.org/10.1112/jlms.70181},
}

@article {Dixon1982,
    AUTHOR = {Dixon, John D. and Formanek, Edward W. and Poland, John C. and
              Ribes, Luis},
     TITLE = {Profinite completions and isomorphic finite quotients},
   JOURNAL = {J. Pure Appl. Algebra},
  FJOURNAL = {Journal of Pure and Applied Algebra},
    VOLUME = {23},
      YEAR = {1982},
    NUMBER = {3},
     PAGES = {227--231},
      ISSN = {0022-4049,1873-1376},
   MRCLASS = {20E18},
  MRNUMBER = {644274},
MRREVIEWER = {S.\ P.\ Demushkin},
       DOI = {10.1016/0022-4049(82)90098-6},
       URL = {https://doi.org/10.1016/0022-4049(82)90098-6},
}

@article {Grunewald2011,
    AUTHOR = {Grunewald, Fritz and Zalesskii, Pavel},
     TITLE = {Genus for groups},
   JOURNAL = {J. Algebra},
  FJOURNAL = {Journal of Algebra},
    VOLUME = {326},
      YEAR = {2011},
     PAGES = {130--168},
      ISSN = {0021-8693,1090-266X},
   MRCLASS = {20E18 (20E26)},
  MRNUMBER = {2746056},
       DOI = {10.1016/j.jalgebra.2010.05.018},
       URL = {https://doi.org/10.1016/j.jalgebra.2010.05.018},
}

@article {Hughes2025,
    AUTHOR = {Hughes, Sam and Kudlinska, Monika},
     TITLE = {On profinite rigidity amongst free-by-cyclic groups {I}: {T}he
              generic case},
   JOURNAL = {Proc. Lond. Math. Soc. (3)},
  FJOURNAL = {Proceedings of the London Mathematical Society. Third Series},
    VOLUME = {130},
      YEAR = {2025},
    NUMBER = {6},
     PAGES = {Paper No. e70059, 43},
      ISSN = {0024-6115,1460-244X},
   MRCLASS = {20E18 (20E26 20E36 20F65 20F67 20J05 20J06)},
  MRNUMBER = {4920397},
MRREVIEWER = {Benjamin\ Klopsch},
       DOI = {10.1112/plms.70059},
       URL = {https://doi.org/10.1112/plms.70059},
}

@article {Linton2026,
    AUTHOR = {Linton, Marco},
     TITLE = {Embedding finitely generated free-by-cyclic groups in
              \{finitely generated free\}-by-cyclic groups},
   JOURNAL = {Int. Math. Res. Not. IMRN},
  FJOURNAL = {International Mathematics Research Notices. IMRN},
      YEAR = {2026},
    NUMBER = {4},
     PAGES = {Paper No. rnag020, 15},
      ISSN = {1073-7928,1687-0247},
   MRCLASS = {20E05 (20E06 20F65)},
  MRNUMBER = {5035094},
       DOI = {10.1093/imrn/rnag020},
       URL = {https://doi.org/10.1093/imrn/rnag020},
}

@incollection {Noskov1979,
    AUTHOR = {Noskov, G. A. and Remeslennikov, V. N. and Roman'kov,
              V. A.},
     TITLE = {Infinite groups},
 BOOKTITLE = {Algebra. {T}opology. {G}eometry, {V}ol. 17 ({R}ussian)},
    SERIES = {Itogi Nauki i Tekhniki},
     PAGES = {65--157, 308},
 PUBLISHER = {Akad. Nauk SSSR, Vsesoyuz. Inst. Nauchn. i Tekhn. Inform.,
              Moscow},
      YEAR = {1979},
   MRCLASS = {20-02},
  MRNUMBER = {584569},
MRREVIEWER = {Yu.\ I.\ Merzlyakov},
}

@inproceedings {Reid2018,
    AUTHOR = {Reid, Alan W.},
     TITLE = {Profinite rigidity},
 BOOKTITLE = {Proceedings of the {I}nternational {C}ongress of
              {M}athematicians---{R}io de {J}aneiro 2018. {V}ol. {II}.
              {I}nvited lectures},
     PAGES = {1193--1216},
 PUBLISHER = {World Sci. Publ., Hackensack, NJ},
      YEAR = {2018},
      ISBN = {978-981-3272-91-0; 978-981-3272-87-3},
   MRCLASS = {20E18 (57M07)},
  MRNUMBER = {3966805},
MRREVIEWER = {Gareth\ Wilkes},
}

@article {Nikolov2007,
    AUTHOR = {Nikolov, Nikolay and Segal, Dan},
     TITLE = {On finitely generated profinite groups. {I}. {S}trong
              completeness and uniform bounds},
   JOURNAL = {Ann. of Math. (2)},
  FJOURNAL = {Annals of Mathematics. Second Series},
    VOLUME = {165},
      YEAR = {2007},
    NUMBER = {1},
     PAGES = {171--238},
      ISSN = {0003-486X,1939-8980},
   MRCLASS = {20E18 (20E32 20F12)},
  MRNUMBER = {2276769},
MRREVIEWER = {Benjamin\ Klopsch},
       DOI = {10.4007/annals.2007.165.171},
       URL = {https://doi.org/10.4007/annals.2007.165.171},
}

@book {Serre,
    AUTHOR = {Serre, Jean-Pierre},
     TITLE = {Trees},
      NOTE = {Translated from the French by John Stillwell},
 PUBLISHER = {Springer-Verlag, Berlin-New York},
      YEAR = {1980},
     PAGES = {ix+142},
      ISBN = {3-540-10103-9},
   MRCLASS = {20H10 (05C05 22E50)},
  MRNUMBER = {607504},
}

\end{document}